\newtheorem{theorem}{Theorem}[section]
\newtheorem{lemma}[theorem]{Lemma}
\newtheorem{proposition}[theorem]{Proposition}
\theoremstyle{definition}
\newtheorem{definition}[theorem]{Definition}
\newtheorem{definition-proposition}[theorem]{Definition-Proposition}
\newtheorem{remark}[theorem]{Remark}
\newtheorem{example}[theorem]{Example}
\def\C{\mathcal{C}}
\def\D{\mathcal{D}}
\def\H{\mathcal {H}}
\def\W{\mathscr{W}}
\def\X{\mathscr{X}}
\def\Y{\mathscr{Y}}
\def\Z{\mathcal {Z}}
\def\I{\mathcal {I}}
\def\M{\mathcal{M}}
\def \text{\mbox}
\providecommand{\add}{\mathop{\rm add}\nolimits}%
\providecommand{\Ext}{\mathop{\rm Ext}\nolimits}%
\providecommand{\Hom}{\mathop{\rm Hom}\nolimits}%
\renewcommand{\mod}{\mathop{\rm mod}\nolimits}%
\def\XX{\widetilde{\X}}
\def\YY{\widetilde{\Y}}
\providecommand{\nc}{\mathop{\rm nc}\nolimits}%
\begin{document}

\title{Mutation of $n$-cotorsion pairs in triangulated categories}

\author[Chang]{Huimin Chang}
\address{
Department of Applied Mathematics,
The Open University of China,
100039 Beijing,
P. R. China
}
\email{changhm@ouchn.edu.cn}
\author[Zhou]{Panyue Zhou}
\address{School of Mathematics and Statistics, Changsha University of Science and Technology, 410114 Changsha, Hunan,  P. R. China}
\email{panyuezhou@163.com}
\begin{abstract}
In this article, we define the notion of $n$-cotorsion pairs in triangulated categories, which is a generalization of the classical cotorsion pairs. We prove that any mutation of an $n$-cotorsion pair is again an $n$-cotorsion pair. When $n=1$, this result generalizes the work of Zhou and Zhu for classical cotorsion pairs. As applications, we give a geometric characterization of $n$-cotorsion pairs in $n$-cluster categories of type $A$ and give a geometric realization of mutation of $n$-cotorsion pairs via rotation of certain configurations of $n$-diagonals.
\end{abstract}

\subjclass[2020]{18E40; 05E10; 18G80}

\keywords{mutation; $n$-cotorsion pair;  triangulated category}

\thanks{Huimin Chang is supported by the National Natural Science Foundation of China (Grant No. 12301047).
Panyue Zhou is supported by the National Natural Science Foundation of China (Grant No. 12371034) and by the Hunan Provincial Natural Science Foundation of China (Grant No. 2023JJ30008). }

\maketitle

\section{Introduction}
The notion of torsion pairs in abelian categories was first introduced by Dickson \cite{D}, and the triangulated version was studied by Iyama and Yoshino \cite{IY}. Later, cotorsion pairs in a triangulated category were introduced by Nakaoka \cite{N} to unify the abelian structures arising from $t$-structures \cite{BBD} and from cluster tilting subcategories \cite{KR,KZ,IY}. Torsion pairs and cotorsion pairs in a triangulated category can be transformed into each other by shifting the torsion-free parts. Hence, it is equivalent to consider torsion pairs and cotorsion pairs in triangulated categories.

 Let $n$ be a positive integer. Motivated by some properties satisfied by Gorenstein projective and Gorenstein injective modules over an Iwanaga-Gorenstein ring, Huerta, Mendoza and Pérez \cite[Definition 2.2]{HMP} introduced the notion of $n$-cotorsion (resp. left $n$-cotorsion, right $n$-cotorsion) pairs in an abelian categories. It is worth to note that $1$-cotorsion pairs coincide with the concept of complete cotorsion pairs. Recently, He and Zhou \cite{HZ} introduced $n$-cotorsion (resp. left $n$-cotorsion, right $n$-cotorsion) pairs in extriangulated categories, which introduced by Nakaoka and Palu in their seminal work \cite{NP}. Extriangulated categories simultaneously generalizes exact categories and triangulated categories. Abelian categories and extension-closed subcategories of a triangulated category are considered as specific instances of extriangulated categories.  Especially, when $n=1$, an $n$-cotorsion pair is just a cotorsion pair in the sense of Nakaoka and Palu \cite[Defintion 4.1]{NP}.

An important motivation for mutation comes from cluster algebras. Cluster algebras were introduced by Fomin and Zelevinsky \cite{FZ} in order to give an algebraic and combinatorial framework for the positive and canonical basis of quantumn groups. The mutation of clusters was defined in cluster algebras. As a categorization of cluster algebra, Buan et al \cite{BMRRT} introduced cluster categories. Cluster categories are triangulated categories by Keller\cite{K}. The rigid indecomposable objects in cluster categories correspond to cluster variables and cluster tilting subcategories correspond to clusters. Just like mutation of cluster, mutation of cluster tilting subcategory was studied through replacing one indecomposable object by a unique other indecomposable object such that one gets a cluster tilting subcategory again. See \cite{BMRRT} for details. As a generalization, mutation of cluster tilting subcategories in arbitrary Krull-Schmidt $K$-linear triangulated categories was studied in \cite{BIRS,IY,P}. Iyama and Yoshino \cite{IY} introduced a more general concept mutation in triangulated categories that makes mutation of cluster tilting subcategory a special case. Zhou and Zhu \cite{ZZ} proved that any mutation of a torsion pair is again a torsion pair.

In this paper, we define $n$-cotorsion pairs in triangulated categories, which is a generalization of the classical cotorsion pairs. It should be noted that the notion of $n$-cotorsion pairs in triangulated categories we defined in this paper is different from that defined by He and Zhou in extriangulated categories and deduced to triangulated categories. We prove that any mutation of an $n$-cotorsion pair is again an $n$-cotorsion pair. When $n=1$, this result generalizes a work of Zhou and Zhu for classical cotorsion pairs. As applications, we give a geometric characterization of $n$-cotorsion pairs in $n$-cluster categories of type $A$ and give a geometric realization of mutation of $n$-cotorsion pairs via rotation of certain configurations of $n$-diagonals.

This paper is organized as follows. In Section 2 we give an overview of cotorsion pairs in triangulated categories, and  mutation of subcategories of triangulated categories.  In Section 3 we define $n$-cotorsion pairs in triangulated categories, recall from \cite{IY} the construction of subfactor triangulated categories and study its compatibility with $n$-cotorsion pairs, and prove the main result.  In Section 4 we give a geometric characterization of $n$-cotorsion pairs in $n$-cluster categories of type $A$ via certain configurations of $n$-diagonals, and introduce the rotation of such combinatorial models, which can give a geometric realization of mutation of $n$-cotorsion pairs in $n$-cluster categories of type $A$.

\subsection*{Conventions} In this paper, $K$ stands for an algebraically closed field. All additive categories considered  are assumed to be Krull-Schmidt, i.e. any object is isomorphic to a finite direct sum of objects whose endomorphism rings are local.
Let $\C$ be a triangulated category, we denote by $\Hom_{\C}(X, Y)$ the set of morphisms from $X$ to $Y$ in $\C$. We denote the
composition of $f\in\Hom_{\C}(X, Y)$ and $g\in\Hom_{\C}(Y, Z)$ by $g\circ f\in\Hom_{\C}(X, Z)$.
When we say that $\X$ is a subcategory of $\C$, we always mean that $\C$
is a full subcategory which is closed under isomorphisms, direct sums and
direct summands.
We denote by $\X^\perp$ (resp. $^\perp\X$) the subcategory whose objects are $M\in\C$ satisfying $\Hom_{\C}(\X,M)=0$ (resp. $\Hom_{\C}(M,\X)=0$). We use $\Ext_{\C}^i(X,Y)$ to denote $\Hom_{\C}(X,Y[i])$, $i\in\mathbb{Z}$, where $[1]$ is the shift functor of $\C$. For an object $X\in\C$, $\add X$ means the additive closure of $X$.
Let $\X$ and $\Y$ be subcategories of $\C$.
We denote by $\X\ast\Y$ the collection of objects in $\C$
consisting of all such $C\in\C$ with triangles
$$X\to C\to Y\to X[1]$$
where $X\in\X$ and $Y\in\Y$.

\section{Preliminaries}
We recall the definition of cotorsion pairs in triangulated categories, and  mutation of subcategories of triangulated categories.

\subsection{Cotorsion pairs in triangulated categories}
We briefly review the definition of the classical cotorsion pairs and some related results in triangulated categories from \cite{IY,CZZ,ZZ}.
\begin{definition}
Let $\X$ and $\Y$ be subcategories of a triangulated category $\C$.
\begin{itemize}
  \item [(1)] The pair $(\X, \Y)$ is called a torsion pair \cite{IY} if
  $$\Hom_{\C}(\X, \Y)=0\;\text{and}\;\C=\X\ast\Y. $$
  The subcategory $\I(\X)=\X\cap\Y[-1]$ is called the core of the torsion pair.
  \item [(2)] The pair $(\X, \Y)$ is called a cotorsion pair \cite{N} if
  $$\Ext^{1}_{\C}(\X, \Y)=0\;\text{and}\;\C=\X\ast\Y[1]. $$
  The subcategory $\I(\X)=\X\cap\Y$ is called the core of the cotorsion pair.
\end{itemize}
\end{definition}

\begin{remark}\label{remark3}
By definition, we know that $(\X,\Y)$ is a cotorsion pair if and only if $(\X,\Y[1])$ is a torsion pair.
\end{remark}

\begin{definition}
Let $\X$ be a subcategory of a triangulated category $\C$ and $n\geq 1$ be an integer.
\begin{itemize}
  \item [(1)] We call $\X$ an $(n+1)$-rigid subcategory if $\Ext^i_{\C}(\X,\X)=0$ for $1\leq i\leq n$.
  \item [(2)] For an object $C$ in $\C$, a right $\X$-approximation of $C$ in $\C$ is a map $X\rightarrow C$, with $X$ in $\X$, such that for all objects $Y$ in $\X$, the sequence
$$\Hom_\C(Y,X)\rightarrow\Hom_\C(Y,C)\rightarrow 0$$
is exact. Dually, we have the concept of left $\X$-approximation of $C$.
  \item [(3)] We call $\X$ contravariantly finite if  any object $C\in\C$ admits a right $\X$-approximation, $\X$  covariantly finite if any object $C\in\C$ admits a left $\X$-approximation, and $\X$  functorially finite if both $\X$ contravariantly finite and covariantly finite.
  \item [(4)] We call $\X$ an $(n+1)$-cluster tilting subcategory if $\X$ is functorially finite and satisfying
  $$\X=\bigcap\limits_{i=1}^{n}\X[-i]^\perp=\bigcap\limits_{i=1}^{n}{^\bot}\X[i].$$
\end{itemize}
\end{definition}

Note that when $n=1$, 2-rigid subcategory is simply called rigid and 2-cluster tilting subcategory is  called cluster tilting.

\begin{lemma}\label{m} Let $\X$ and $\Y$ be subcategories of a triangulated category $\C$.
\begin{itemize}
\item [(1)] The pair $(\X, \Y)$ is  a cotorsion pair if and only if the following hold.
\begin{itemize}
\item [(a)] $\X{^\bot}[-1]=\Y$;
\item [(b)] $\X={^\bot}\Y[1]$;
\item [(c)] $\X$ is contravariantly finite or $\Y$ is covariantly finite.
\end{itemize}
\item [(2)] $(\X, \X)$ is  a cotorsion pair if and only if $\X$ is cluster tilting.
\item [(3)] The core $\I(\X)=\X\cap\Y$ of the cotorsion pair $(\X, \Y)$ is rigid.
\end{itemize}
\end{lemma}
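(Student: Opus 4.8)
The plan is to deduce all three parts from the definition together with Remark \ref{remark3}, which allows me to move freely between the cotorsion pair $(\X,\Y)$ and the torsion pair $(\X,\Y[1])$. Throughout I would rewrite (a) and (b) as $\X^\perp=\Y[1]$ and $\X={}^\perp(\Y[1])$, and use $\Ext^1_\C(\X,\Y)=\Hom_\C(\X,\Y[1])$.

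For the forward direction of (1), suppose $(\X,\Y)$ is a cotorsion pair. The inclusions $\Y[1]\subseteq\X^\perp$ and $\X\subseteq{}^\perp(\Y[1])$ are immediate from $\Ext^1_\C(\X,\Y)=0$. For the reverse inclusions I would use the decomposition $\C=\X\ast\Y[1]$: given $M\in\X^\perp$, a triangle $X\xrightarrow{a}M\to Y[1]\to X[1]$ has $a=0$ since $\Hom_\C(\X,M)=0$, so $M$ is a direct summand of $Y[1]$ and hence lies in $\Y[1]$ (which is closed under summands); dually, if $M\in{}^\perp(\Y[1])$ then $M\to Y[1]$ vanishes, $a$ is a split epimorphism, and $M$ is a summand of $X\in\X$. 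This yields (a) and (b). For (c), applying $\Hom_\C(X',-)$ with $X'\in\X$ to the decomposition triangle of an arbitrary object and using $\Hom_\C(X',Y[1])=0$ shows the first map is a right $\X$-approximation, so $\X$ is contravariantly finite; the dual reading shows $\Y$ is covariantly finite, so in fact both hold.

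For the reverse direction of (1), assume (a),(b),(c). Then $\Ext^1_\C(\X,\Y)=0$ is immediate from (b). The only genuine work is to produce, for each $C\in\C$, a triangle $X\to C\to Y[1]\to X[1]$ with $X\in\X$, $Y\in\Y$; equivalently, to show the cone of a suitable approximation lies in $\X^\perp=\Y[1]$. Taking (c) in the form ``$\X$ contravariantly finite'' (the other case being dual), I would choose a \emph{minimal} right $\X$-approximation $f\colon X\to C$, which exists since $\C$ is Krull--Schmidt, and complete it to a triangle $X\xrightarrow{f}C\xrightarrow{g}N\xrightarrow{h}X[1]$. The heart of the matter is a Wakamatsu-type claim: $N\in\X^\perp$. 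Because $\X={}^\perp(\Y[1])$ is closed under extensions, for any $\alpha\colon X'\to N$ with $X'\in\X$ the class $h\circ\alpha\in\Hom_\C(X',X[1])$ produces a triangle $X\to W\to X'\xrightarrow{h\alpha}X[1]$ with $W\in\X$; lifting $W\to C$ through the approximation and invoking minimality of $f$ forces $h\circ\alpha=0$, after which $\alpha$ factors through $g$ and is annihilated by $g\circ f=0$. Hence $\Hom_\C(\X,N)=0$, so $N\in\X^\perp=\Y[1]$ by (a), and $C\in\X\ast\Y[1]$. This Wakamatsu step---proving the cone of the minimal approximation is orthogonal to $\X$---is the main obstacle, and it is precisely where the extension-closure coming from (b) and the minimality of $f$ must be used together.

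Part (2) then follows by specializing (1) to $\Y=\X$. Using $\X[-1]^\perp=\X^\perp[-1]$, conditions (a) and (b) become $\X=\X[-1]^\perp={}^\perp\X[1]$, which are exactly the equalities defining a cluster tilting subcategory; moreover, by the forward direction of (1), a cotorsion pair $(\X,\X)$ renders $\X$ both contravariantly and covariantly finite, i.e. functorially finite, supplying the remaining requirement. Conversely, cluster tilting gives (a),(b) and functorial finiteness, hence (c), so $(\X,\X)$ is a cotorsion pair by (1). Finally, Part (3) is immediate: for $M,N\in\I(\X)=\X\cap\Y$ one has $M\in\X$ and $N\in\Y$, whence $\Ext^1_\C(M,N)\subseteq\Ext^1_\C(\X,\Y)=0$, which is exactly rigidity of the core.
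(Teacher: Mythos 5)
Your proof is correct. The paper itself gives no proof of this lemma (it is recalled from \cite{IY,CZZ,ZZ}), and your argument is exactly the standard one found in those sources: the forward direction by splitting the decomposition triangle against an object in $\X^{\perp}$ or ${}^{\perp}(\Y[1])$, and the reverse direction by the Wakamatsu-type lemma applied to a minimal right $\X$-approximation, using that ${}^{\perp}(\Y[1])$ is extension-closed and that minimal approximations exist in a Krull--Schmidt category; parts (2) and (3) then follow as you say.
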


\subsection{Mutation of subcategories of triangulated categories}
In this subsection, we review from \cite{IY} the notion of mutation of subcategories of triangulated categories.

The following result is well known and straightforward to check.
\begin{lemma}\cite[Lemma 1.1]{BM2}
Let $\X$ be a subcategory of $\C$ and $C$ be an object of $\C$.
\begin{itemize}
  \item [(1)] If there is a right $\X$-approximation of $C$, then there is a minimal right $\X$-approximation of $C$, unique up to isomorphism.
  \item [(2)] If $f: X\rightarrow C$ is a minimal right $\X$-approximation of $C$, then each right $\X$-approximation is, up to isomorphism, of the form $f\oplus 0: X\oplus X^{'}\rightarrow C$.
\end{itemize}
\end{lemma}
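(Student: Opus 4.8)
The plan is to reduce everything to the standard fact that in a Krull--Schmidt category every morphism splits off its right minimal part. Recall that a right $\X$-approximation $f\colon X\to C$ is said to be \emph{right minimal} if every $g\in\End_\C(X)$ satisfying $f\circ g=f$ is an automorphism. The central lemma I would establish first is the following decomposition: for any morphism $f\colon X\to C$ with $X\in\X$, there is a direct sum decomposition $X\cong X_0\oplus X_1$ with respect to which $f=(f_0,\ 0)$, where $f_0\colon X_0\to C$ is right minimal and $f$ vanishes on $X_1$. Since $\C$ is Krull--Schmidt, the endomorphism ring $\End_\C(X)$ is semiperfect, so one can argue by induction on the number of indecomposable summands of $X$: if $f$ is not already right minimal, a non-invertible endomorphism fixing $f$ yields, after splitting idempotents, a nonzero summand of $X$ on which $f$ is zero, which one peels off and repeats.

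For part (1), existence follows by applying this decomposition to a given right $\X$-approximation $g\colon X\to C$: writing $g=(g_0,\ 0)$ on $X\cong X_0\oplus X_1$, the map $g_0\colon X_0\to C$ is right minimal, and it remains a right $\X$-approximation because any map $Y\to C$ that factors through $g$ factors through $g_0$ (the contribution of the killed summand $X_1$ vanishes). Thus $g_0$ is a minimal right $\X$-approximation. For uniqueness, I would take two minimal right $\X$-approximations $f\colon X\to C$ and $f'\colon X'\to C$. The approximation property of each gives morphisms $s\colon X'\to X$ and $t\colon X\to X'$ with $f\circ s=f'$ and $f'\circ t=f$, whence $f\circ(s\circ t)=f$ and $f'\circ(t\circ s)=f'$. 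Minimality then forces $s\circ t$ and $t\circ s$ to be automorphisms, so $s$ and $t$ are mutually inverse isomorphisms intertwining the two approximations.

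For part (2), given the minimal right $\X$-approximation $f\colon X\to C$ and an arbitrary right $\X$-approximation $h\colon W\to C$, I would apply the decomposition lemma to $h$, obtaining $W\cong W_0\oplus W_1$ with $h=(h_0,\ 0)$ and $h_0$ right minimal. As in the existence argument, $h_0$ is again a minimal right $\X$-approximation, so by the uniqueness proved in part (1) there is an isomorphism $\phi\colon X\to W_0$ with $h_0\circ\phi=f$. Transporting along $\phi$ identifies $h$ with $f\oplus 0\colon X\oplus W_1\to C$, which is exactly the asserted form with $X'=W_1$.

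The main obstacle is the decomposition lemma itself: it is precisely where the Krull--Schmidt hypothesis is used, and some care is needed to extract, from a non-minimal $f$, a genuine direct summand of $X$ on which $f$ vanishes, rather than merely a non-invertible endomorphism fixing $f$. Once this splitting is in hand, both parts follow formally from the universal factorization property of approximations together with minimality.
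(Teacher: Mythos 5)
Your argument is correct and is precisely the standard one; the paper itself gives no proof of this lemma, merely citing \cite[Lemma 1.1]{BM2} and remarking that it is ``well known and straightforward to check,'' so there is no in-paper argument to compare against. The only genuine content is the decomposition lemma you isolate --- that any $f\colon X\to C$ with $X$ in a Krull--Schmidt category splits as $(f_0,\,0)\colon X_0\oplus X_1\to C$ with $f_0$ right minimal --- and you correctly identify this as the step where semiperfectness of $\End_\C(X)$ enters; once it is in hand, your deductions of (1) and (2) from the factorization property and minimality are exactly the standard ones and are sound.
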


\begin{definition}\cite[Definition 2.5]{IY}\label{h}
Fix a functorially finite rigid subcategory $\D$ of $\C$. For a subcategory $\X$ of $\C$, put
$$\mu^{-1}_{\D}(\X):=(\D\ast\X[1])\cap{^\bot}\D[1].$$
That is, $\mu^{-1}_{\D}(\X)$ consists of all $M\in\C$ such that there exists a triangle
$$X\stackrel{f}\rightarrow D\rightarrow M\rightarrow X[1]$$
with $X\in\X$ and a left $\D$-approximation $f$. Dually, for a subcategory $\Y$ of $\C$, put
$$\mu_{\D}(\Y):=(\Y[-1]\ast\D)\cap\D[-1]^{\bot}.$$
That is, $\mu_{\D}(\Y)$ consists of all $M\in\C$ such that there exists a triangle
$$M\rightarrow D\stackrel{g} \rightarrow Y\rightarrow M$$
with  $Y\in\Y$ and a right $\D$-approximation  $g$.

 In this case, $\mu^{-1}_{\D}(\X)$ is called the forward $\D$-mutation of $\X$ and $\mu_{\D}(\Y)$ is called the backward $\D$-mutation of $\Y$.
\end{definition}

\begin{remark}
It is clear that $\mu_{\D}(\D)=\D=\mu^{-1}_{\D}(\D)$. When $\D=0$, we have $\mu^{-1}_{\D}(\M)=\M[1]$ and $\mu_{\D}(\M)=\M[-1]$.
\end{remark}
\section{Mutation of $n$-cotorsion pairs}
In this section, we fix an integer $n\geq 1$.

\subsection{$n$-cotorsion pairs in triangulated categories}

We define $n$-cotorsion pairs in  a triangulated category, which is a generalization of the classical cotorsion pairs.
\begin{definition}
Let $\C$ be a triangulated category and $\X,\Y$ be two subcategories of $\C$. The pair $(\X,\Y)$ is called an $n$-cotorsion pair if the following conditions hold.
\begin{enumerate}
  \item [(1)] $\X=\bigcap\limits_{i=1}^{n}{^\bot}\Y[i]$~~ for all $1\leq i\leq n$;
  \item [(2)] $\Y=\bigcap\limits_{i=1}^{n}\X[-i]^\perp$~~ for all $1\leq i\leq n$;
   \item [(3)] $\X$ is contravariantly finite and $\Y$ is covariantly finite.
\end{enumerate}
\end{definition}

\begin{remark}
It is easy to check that the pair $(\X,\X)$ is an $n$-cotorsion pair in $\C$ if and only if $\X$ is $(n+1)$-cluster tilting. When $n=1$, the concept of $1$-cotorsion pair is compatible with the classical definition of a cotorsion pair in the sense of Nakaoka \cite{N}.
\end{remark}

\begin{remark}
In \cite{HZ}, He and Zhou defined the notion of $n$-cotorsion pairs in extriangulated categories. Note that exact categories and triangulated categories are extriangulated categories. However, when their concept degenerates into triangulated categories, it does not align with our defined $n$-cotorsion pairs. In particular, in the case of finite triangulated categories, i.e. there are only finitely many indecomposable objects up to isomorphisms,  their $n$-cotorsion pairs satisfy our defined $n$-cotorsion pairs. Therefore, we also adopt the term $n$-cotorsion pair.
\end{remark}

 Now we give some examples of $n$-cotorsion pairs.

\begin{example}
\begin{enumerate}
  \item [(1)] Let $\C_{A_{3}}^{2}$ be the 2-cluster category of type $A_3$. We give an example of a $2$-cotorsion pair in $\C_{A_{3}}^{2}$, see Example \ref{z} for details.

  \item [(2)] Let $\C_{D_{4}}^{3}$ be the 3-cluster category of type $D_4$. The AR-quiver of $\C_{D_{4}}^{3}$ is shown in Figure \ref{aa}. Let $\X=\{(1,5),(1,8),(4,8),(3,7),(2,6)\}$ and $\Y=\{(1,8),(1,11)_r,(1,11)_g,(10,20)_r,(10,20)_g,(9,19)_r,(9,19)_g,(8,18)_r,(8,18)_g\}$ be two subcategories of $\C_{D_{4}}^{3}$. One can check directly from the AR-quiver of $\C_{D_{4}}^{3}$ that $\X=\bigcap\limits_{i=1}^{3}{^\bot}\Y[i]$ and $\Y=\bigcap\limits_{i=1}^{3}\X[-i]^\perp$. Thus, $(\X,\Y)$ is a $3$-cotorsion pair.
\begin{center}

\begin{figure}[h]
\begin{tikzpicture}[scale=0.8,
fl/.style={->,shorten <=6pt, shorten >=6pt,>=latex}]
\coordinate (13) at (0,0) ;
\coordinate (14) at (1,1) ;
\coordinate (15) at (2,2) ;
\coordinate (00) at (2,1) ;
\coordinate (11) at (4,1) ;
\coordinate (22) at (6,1) ;
\coordinate (33) at (8,1) ;
\coordinate (44) at (10,1) ;
\coordinate (55) at (12,1) ;
\coordinate (66) at (14,1) ;
\coordinate (77) at (16,1) ;
\coordinate (88) at (18,1) ;
\coordinate (99) at (20,1) ;
\coordinate (1010) at (22,1) ;
\coordinate (24) at (2,0) ;
\coordinate (25) at (3,1) ;
\coordinate (26) at (4,2) ;
\coordinate (35) at (4,0) ;
\coordinate (36) at (5,1) ;
\coordinate (37) at (6,2) ;
\coordinate (46) at (6,0) ;
\coordinate (47) at (7,1) ;
\coordinate (48) at (8,2) ;
\coordinate (57) at (8,0) ;
\coordinate (58) at (9,1) ;
\coordinate (59) at (10,2) ;
\coordinate (68) at (10,0) ;
\coordinate (69) at (11,1) ;
\coordinate (610) at (12,2) ;
\coordinate (79) at (12,0) ;
\coordinate (710) at (13,1) ;
\coordinate (711) at (14,2) ;
\coordinate (810) at (14,0) ;
\coordinate (811) at (15,1) ;
\coordinate (812) at (16,2) ;
\coordinate (911) at (16,0) ;
\coordinate (912) at (17,1) ;
\coordinate (913) at (18,2) ;
\coordinate (1012) at (18,0) ;
\coordinate (1013) at (19,1) ;
\coordinate (1014) at (20,2) ;
\coordinate (1113) at (20,0) ;
\coordinate (1114) at (21,1) ;
\coordinate (1115) at (22,2) ;

\draw[fl] (13) -- (14) ;
\draw[fl] (14) -- (15) ;
\draw[fl] (14) -- (24) ;
\draw[fl] (14) -- (00) ;
\draw[fl] (00) -- (25) ;
\draw[fl] (25) -- (11) ;
\draw[fl] (11) -- (36) ;
\draw[fl] (36) -- (22) ;
\draw[fl] (22) -- (47) ;
\draw[fl] (47) -- (33) ;
\draw[fl] (33) -- (58) ;
\draw[fl] (58) -- (44) ;
\draw[fl] (44) -- (69) ;
\draw[fl] (69) -- (55) ;
\draw[fl] (55) -- (710) ;
\draw[fl] (710) -- (66) ;
\draw[fl] (66) -- (811) ;
\draw[fl] (811) -- (77) ;
\draw[fl] (77) -- (912) ;
\draw[fl] (912) -- (88) ;
\draw[fl] (88) -- (1013) ;
\draw[fl] (1013) -- (99) ;
\draw[fl] (99) -- (1114) ;
\draw[fl] (1114) -- (1010) ;
\draw[fl] (15) --(25) ;
\draw[fl] (24) --(25) ;
\draw[fl] (25) --(35) ;
\draw[fl] (25) --(26) ;
\draw[fl] (35) --(36) ;
\draw[fl] (36) --(37) ;
\draw[fl] (26) --(36) ;
\draw[fl] (46) --(47) ;
\draw[fl] (47) --(48) ;
\draw[fl] (37) --(47) ;
\draw[fl] (36) --(46) ;
\draw[fl] (57) --(58) ;
\draw[fl] (58) --(59) ;
\draw[fl] (48) --(58) ;
\draw[fl] (47) --(57) ;
\draw[fl] (58) --(68) ;
\draw[fl] (68) --(69) ;
\draw[fl] (69) --(610) ;
\draw[fl] (59) --(69) ;
\draw[fl] (79) --(710) ;
\draw[fl] (710) --(711) ;
\draw[fl] (610) --(710) ;
\draw[fl] (69) --(79) ;
\draw[fl] (810) --(811) ;
\draw[fl] (811) --(812) ;
\draw[fl] (711) --(811) ;
\draw[fl] (710) --(810) ;
\draw[fl] (911) --(912) ;
\draw[fl] (912) --(913) ;
\draw[fl] (812) --(912) ;
\draw[fl] (811) --(911) ;
\draw[fl] (1012) --(1013) ;
\draw[fl] (1013) --(1014) ;
\draw[fl] (913) --(1013) ;
\draw[fl] (912) --(1012) ;
\draw[fl] (1113) --(1114) ;
\draw[fl] (1114) --(1115) ;
\draw[fl] (1014) --(1114) ;
\draw[fl] (1013) --(1113) ;
\draw (13) node[scale=0.5] {(1,5)} ;
\draw (14) node[scale=0.5] {(1,8)} ;
\draw (15) node[scale=0.5] {$(1,11)_r$} ;
\draw (00) node[scale=0.5] {$(1,11)_g$} ;
\draw (11) node[scale=0.5] {$(4,14)_r$} ;
\draw (22) node[scale=0.5] {$(7,17)_g$} ;
\draw (33) node[scale=0.5] {$(10,20)_r$} ;
\draw (44) node[scale=0.5] {$(3,13)_g$} ;
\draw (55) node[scale=0.5] {$(6,16)_r$} ;
\draw (66) node[scale=0.5] {$(9,19)_g$} ;
\draw (77) node[scale=0.5] {$(2,12)_r$} ;
\draw (88) node[scale=0.5] {$(5,15)_g$} ;
\draw (99) node[scale=0.5] {$(8,18)_r$} ;
\draw (1010) node[scale=0.5] {$(1,11)_g$} ;
\draw (24) node[scale=0.5] {(4,8)} ;
\draw (25) node[scale=0.5] {(4,11)} ;
\draw (26) node[scale=0.5] {$(4,14)_g$} ;
\draw (35) node[scale=0.5] {(7,11)} ;
\draw (36) node[scale=0.5] {(7,14)} ;
\draw (37) node[scale=0.5] {$(7,17)_r$} ;
\draw (46) node[scale=0.5] {(10,14)} ;
\draw (47) node[scale=0.5] {(10,17)} ;
\draw (48) node[scale=0.5] {$(10,20)_g$} ;
\draw (57) node[scale=0.5] {(3,7)} ;
\draw (58) node[scale=0.5] {(3,10)} ;
\draw (59) node[scale=0.5] {$(3,13)_r$} ;
\draw (68) node[scale=0.5] {(6,10)} ;
\draw (69) node[scale=0.5] {(6,13)} ;
\draw (610) node[scale=0.5] {$(6,16)_g$} ;
\draw (79) node[scale=0.5] {(9,13)} ;
\draw (710) node[scale=0.5] {(9,16)} ;
\draw (711) node[scale=0.5] {$(9,19)_r$} ;
\draw (810) node[scale=0.5] {(2,6)} ;
\draw (811) node[scale=0.5] {(2,9)} ;
\draw (812) node[scale=0.5] {$(2,12)_g$} ;
\draw (911) node[scale=0.5] {(5,9)} ;
\draw (912) node[scale=0.5] {(5,12)} ;
\draw (913) node[scale=0.5] {$(5,15)_r$} ;
\draw (1012) node[scale=0.5] {(8,12)} ;
\draw (1013) node[scale=0.5] {(8,15)} ;
\draw (1014) node[scale=0.5] {$(8,18)_g$} ;
\draw (1113) node[scale=0.5] {(1,5)} ;
\draw (1114) node[scale=0.5] {(1,8)} ;
\draw (1115) node[scale=0.5] {$(1,11)_r$} ;
\end{tikzpicture}
\caption{The AR-quiver of  $\C_{D_{4}}^{3}$}
\label{aa}
\end{figure}
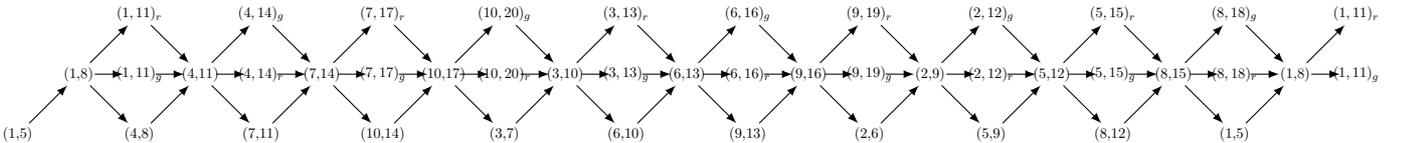
\end{center}

\end{enumerate}
\end{example}

\begin{definition}
Let $(\X,\Y)$ be an $n$-cotorsion pair in $\C$. We call $\I(\X)=\X\cap\Y$ the core of $(\X,\Y)$.
\end{definition}

\begin{remark}\label{l}
By the definition of $n$-cotorsion pair, it is easy to see the core $\I(\X)$ is an $(n+1)$-rigid subcategory.
\end{remark}

\subsection{Compatibility with subfactor triangulated categories}
Let $\C$ be a Hom-finite Krull-Schmidt triangulated category over $K$ with a Serre functor $\mathbb{S}$ from now on. Recall from \cite{BK} that a functor $\mathbb{S}\colon \C\rightarrow \C$ is called a Serre functor if there is a  bifunctorial isomorphism
$$D\Hom_{\C}(X,Y)\cong\Hom_{\C}(Y,\mathbb{S} X),$$
for any objects $X,Y\in\C$.  We put
$$\mathbb{S}_n=\mathbb{S}\circ[-n]:\C\rightarrow\C.$$
Recall that $\C$ is $n$-Calabi-Yau if and only if $\mathbb{S}_n$ is an identity functor.

In this subsection, we fix a functorially finite $(n+1)$-rigid subcategory $\D$ of $\C$ which satisfying the condition $\mathbb{S}\D=\D[n+1]$. Then it is easy to check that $\bigcap\limits_{i=1}^{n}\D[-i]^\perp=\bigcap\limits_{i=1}^{n}{^\bot}\D[i]$, which is denoted by $\Z$. The quotient category $\mathfrak{U}:=\Z/\D$ is called a subfactor triangulated category which is defined by the following data.
\begin{itemize}
  \item [(1)] The objects in $\mathfrak{U}$ are the same as  $\Z$.
  \item [(2)] The morphism space $\Hom_{\mathfrak{U}}(X,Y)$ is defined as
  $$\Hom_{\mathfrak{U}}(X,Y):=\Hom_{\Z}(X,Y)/\D(X,Y)$$
  for each $X,Y\in\Z$, where $\D(X,Y)$ is the subspace of $\Hom_{\Z}(X,Y)$ consisting of morphisms factoring through objects in $\D$.
\end{itemize}

It is proved in \cite{IY} that $\mathfrak{U}$ carries a natural triangulated structure inherited from the triangulated structure of $\C$ as follows.
\begin{itemize}
  \item[$\bullet$] For any object $X\in\mathfrak{U}$, choose a left $\D$-approximation $f:D\rightarrow X$ and extend it to a triangle
      $$X\stackrel{f}\rightarrow D\rightarrow Z\rightarrow X[1],$$
      then the shift of $X$ in $\mathfrak{U}$ is defined to be $Z$, denoted by $X\langle1\rangle$.
  \item[$\bullet$] For any triangle $X\stackrel{a}\rightarrow Y\stackrel{b}\rightarrow Z\stackrel{c}\rightarrow X[1]$ in $\C$ with $X,Y,Z\in\Z$, since $\Hom_{\C}(Z,D[1])=0$, there is a commutative diagram:

$$
{
\xymatrix@-2mm@C-0.01cm{
     X \ar@{=}[d]\ar[rr]^{a} && Y \ar[d]\ar[rr]^{b}&& Z \ar[d]^{d}\ar[rr]^{c} && X[1]\ar@{=}[d] \\
  X \ar[rr]^{f} && D \ar[rr]^{g}&& X\langle1\rangle \ar[rr]^{h} && X[1]\\
\\
}
}
$$
Now we consider the complex
$$X\stackrel{\bar{a}}\rightarrow Y\stackrel{\bar{b}}\rightarrow Z\stackrel{\bar{d}}\rightarrow X\langle1\rangle$$
in $\mathfrak{U}$. We define triangles in $\mathfrak{U}$ as the complexes obtained in this way.
\end{itemize}
The following results are useful.
\begin{lemma}\label{c}
Let $\D$ be a functorially finite $(n+1)$-rigid subcategory of $\C$  satisfying  $\mathbb{S}\D=\D[n+1]$ and $\Z=\bigcap\limits_{i=1}^{n}\D[-i]^\perp=\bigcap\limits_{i=1}^{n}{^\bot}\D[i]$. We have the following results.
\begin{itemize}
  \item [(1)] \cite[Theorem 4.7]{IY} The subfactor category $\mathfrak{U}:=\Z/\D$ forms a triangulated category with a Serre functor
$\mathbb{S}_n\circ\langle n+1\rangle$. In particular, if $\C$ is $(n+1)$-Calabi–Yau, then so is $\mathfrak{U}$.
  \item [(2)]\cite[Lemma 4.8]{IY} For any $X,Y\in\Z$, there exists an isomorphism
  $$\mathfrak{U}(X,Y\langle i\rangle)\cong\C(X,Y[i])$$
  for any $1\leq i\leq n$.
  \item [(3)] \cite[Theorem 4.9]{IY} There exists a one-one correspondence between $(n+1)$-cluster tilting subcategories of $\C$ containing $\D$ and $(n+1)$-cluster tilting subcategories of $\mathfrak{U}$.
\end{itemize}
\end{lemma}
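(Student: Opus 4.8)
The plan is to recognise the three assertions as, respectively, \cite[Theorem 4.7]{IY}, \cite[Lemma 4.8]{IY} and \cite[Theorem 4.9]{IY}, so that the essential task reduces to confirming that our standing hypotheses on $\D$---functorially finite, $(n+1)$-rigid, and $\mathbb{S}\D=\D[n+1]$, together with $\Z=\bigcap_{i=1}^{n}\D[-i]^{\perp}=\bigcap_{i=1}^{n}{}^{\perp}\D[i]$---are exactly those under which Iyama and Yoshino establish the reduction. I would nevertheless organise a self-contained argument by proving (2) first and bootstrapping (1) and (3) from it, treating the bare triangulated structure of $\mathfrak{U}$ as the single deep input imported from \cite{IY}.

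For (2), fix $X,Y\in\Z$ and the defining triangle $Y\xrightarrow{f}D\to Y\langle 1\rangle\to Y[1]$ with $f$ a left $\D$-approximation. Applying $\Hom_{\C}(X,-)$ and using $X\in\Z\subseteq{}^{\perp}\D[1]$ to kill the term $\Hom_{\C}(X,D[1])=0$, I obtain that $h_\ast\colon\Hom_{\C}(X,Y\langle 1\rangle)\to\Hom_{\C}(X,Y[1])$ is surjective with kernel the maps factoring through $D$; after passing to the quotient by morphisms through $\D$ this yields $\Hom_{\mathfrak{U}}(X,Y\langle 1\rangle)\cong\Hom_{\C}(X,Y[1])$. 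Iterating the construction of $\langle i\rangle$ and repeating the computation gives $\mathfrak{U}(X,Y\langle i\rangle)\cong\C(X,Y[i])$ for every $1\le i\le n$; the inductive step stays within the range where $\Z\subseteq{}^{\perp}\D[i]$ applies, which is precisely why the statement is capped at $i\le n$.

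The bare triangulated structure in (1) I would take from \cite{IY}: that $\langle 1\rangle$ and its dual $\langle -1\rangle$ are mutually inverse autoequivalences, and that the candidate triangles described before the lemma satisfy (TR1)--(TR4), rests on the $(n+1)$-rigidity of $\D$ and the vanishing $\Hom_{\C}(\Z,\D[1])=0$, the octahedral axiom being the technical heart and verified by lifting the relevant diagrams back to $\C$. For the Serre functor I would transport the duality $D\Hom_{\C}(X,Y)\cong\Hom_{\C}(Y,\mathbb{S}X)$ through the quotient, using $\mathbb{S}\D=\D[n+1]$---the same condition that makes the two descriptions of $\Z$ coincide---to see that $\mathbb{S}_n\circ\langle n+1\rangle$ descends to a well-defined autoequivalence of $\mathfrak{U}$, and then combining (2) with Serre duality on $\C$ to check the bifunctorial isomorphism $D\Hom_{\mathfrak{U}}(X,Y)\cong\Hom_{\mathfrak{U}}(Y,\mathbb{S}_n\langle n+1\rangle X)$; the stated Calabi--Yau specialisation is read off directly from this formula. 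Finally, for (3) I would use the isomorphisms of (2) to translate the defining vanishing and approximation conditions of an $(n+1)$-cluster tilting subcategory between $\C$ and $\mathfrak{U}$, sending $\T\supseteq\D$ to $\T/\D$ and lifting back by preimages, and verify the two assignments are mutually inverse. I expect the main obstacle to be the Serre-functor computation: reconciling the subfactor shift $\langle n+1\rangle$ with the ambient shift $[n+1]$ under $\mathbb{S}\D=\D[n+1]$ is exactly the step where the restriction $i\le n$ in (2) no longer applies and the $\D$-twisting built into the reduction must be tracked explicitly.
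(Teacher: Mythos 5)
The paper gives no proof of this lemma: all three parts are quoted verbatim from Iyama--Yoshino (\cite[Theorems 4.7, 4.9, Lemma 4.8]{IY}), and your proposal correctly identifies this and your sketches (the long-exact-sequence computation for (2), importing the triangulated structure and transporting Serre duality for (1), translating the cluster-tilting conditions for (3)) are consistent with how those results are actually proved in \cite{IY}. No gap; this matches the paper's treatment.
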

The following lemma is useful.
\begin{lemma}\label{b}
Let $(\X,\Y)$ be an $n$-cotorsion pair in $\C$ with core $\I(\X)$. Then $\D\subset\I(\X)$ if and only if $\D\subset\X\subset\Z$ and $\D\subset\Y\subset\Z$.
\end{lemma}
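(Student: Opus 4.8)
The plan is to prove both implications of the biconditional by unwinding the definitions of the core, of the subcategory $\Z$, and of the orthogonality conditions defining an $n$-cotorsion pair. Throughout I will use that $(\X,\Y)$ being an $n$-cotorsion pair gives $\X=\bigcap_{i=1}^{n}{}^\bot\Y[i]$ and $\Y=\bigcap_{i=1}^{n}\X[-i]^\perp$, and that $\D$ satisfies $\Z=\bigcap_{i=1}^{n}\D[-i]^\perp=\bigcap_{i=1}^{n}{}^\bot\D[i]$ by the standing hypothesis recalled in Lemma~\ref{c}.

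For the ``if'' direction, suppose $\D\subset\X$ and $\D\subset\Y$. Since $\I(\X)=\X\cap\Y$ by definition of the core, the inclusion $\D\subset\I(\X)$ is immediate. (Note this direction does not even need the hypothesis $\D\subset\Z$.) The substance is in the ``only if'' direction: assume $\D\subset\I(\X)=\X\cap\Y$, so in particular $\D\subset\X$ and $\D\subset\Y$, and it remains only to establish $\X\subset\Z$ and $\Y\subset\Z$. First I would show $\Y\subset\Z$. Fix $Y\in\Y$ and $1\le i\le n$; I must check $Y\in\D[-i]^\perp$, i.e. $\Hom_\C(\D,Y[i])=0$, equivalently $\Ext^i_\C(\D,Y)=0$. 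Since $\D\subset\X$, this follows because $\Y=\bigcap_{i=1}^{n}\X[-i]^\perp$ forces $\Ext^i_\C(\X,Y)=0$ for all $1\le i\le n$, and in particular $\Ext^i_\C(\D,Y)=0$. Hence $Y\in\bigcap_{i=1}^{n}\D[-i]^\perp=\Z$, giving $\Y\subset\Z$. Dually I would show $\X\subset\Z$: for $X\in\X$ and $1\le i\le n$, the condition $\X=\bigcap_{i=1}^{n}{}^\bot\Y[i]$ gives $\Ext^i_\C(X,\Y)=0$, and since $\D\subset\Y$ this yields $\Ext^i_\C(X,\D)=0$, i.e. $X\in{}^\bot\D[i]$ for each $i$, so $X\in\bigcap_{i=1}^{n}{}^\bot\D[i]=\Z$, giving $\X\subset\Z$.

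The key mechanism throughout is the translation between the core condition $\D\subset\X\cap\Y$ and the vanishing of the relevant $\Ext^i$ groups, exploiting that $\X$ and $\Y$ are \emph{each other's} orthogonals in the defining equalities of an $n$-cotorsion pair. I do not expect a serious obstacle here: the argument is essentially a diagram-free bookkeeping of the two orthogonality identities against the two definitions of $\Z$. The only point requiring a little care is to verify that the two descriptions of $\Z$ are used correctly on the correct side---using $\Z=\bigcap_{i=1}^{n}\D[-i]^\perp$ to place $\Y$ and $\Z=\bigcap_{i=1}^{n}{}^\bot\D[i]$ to place $\X$---but this matches exactly the variance of the orthogonality conditions $\Y=\bigcap_{i=1}^{n}\X[-i]^\perp$ and $\X=\bigcap_{i=1}^{n}{}^\bot\Y[i]$, so the two halves run in perfect duality.
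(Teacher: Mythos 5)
Your proof is correct and follows essentially the same route as the paper's: the ``if'' direction is immediate from $\I(\X)=\X\cap\Y$, and the ``only if'' direction uses $\D\subset\Y$ together with $\X=\bigcap_{i=1}^{n}{}^\bot\Y[i]$ to get $\Hom_\C(X,\D[i])=0$ and hence $\X\subset\Z$, with the dual argument for $\Y\subset\Z$. You simply write out both halves explicitly where the paper says ``similarly.''
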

\begin{proof}
The `if' part is trivial since  $\I(\X)=\X\cap\Y$. To prove the `only if' part, let $X$ be an arbitrary object in $\X$. Since  $\D\subset\I(\X)\subset\Y$, we have $\Hom(X,\D[i])=0$  for $1\leq i\leq n$. It follows that $X\in\Z$ and hence $\X\subset\Z$. Similarly, we have $\Y\subset\Z$.
\end{proof}
By the lemma above, one can consider the relationship between $n$-cotorsion pairs in $\C$ whose cores contain $\D$ and
$n$-cotorsion pairs in the subfactor triangulated category $\mathfrak{U}$. In the following, for a subcategory $\W$ of $\C$ satisfying  $\D\subset\W\subset\Z$, we denote $\overline{\W}$ the subcategory of $\Z$. It is clear that any subcategory of $\Z$ has this form.
\begin{lemma}\label{a}
Let $\X$ be a subcategory of $\C$ satisfying  $\D\subset\X\subset\Z$. Then
\begin{itemize}
  \item [(1)] $\X$ is a contravariantly finite subcategory of $\C$ if and only if $\overline{\X}$ is a contravariantly finite subcategory of $\mathfrak{U}$.
  \item [(2)] $\X$ is a covariantly finite subcategory of $\C$ if and only if $\overline{\X}$ is a covariantly finite subcategory of $\mathfrak{U}$.
  \item [(3)] $\X$ is a functorially finite subcategory of $\C$ if and only if $\overline{\X}$ is a functorially finite subcategory of $\mathfrak{U}$.
\end{itemize}
\end{lemma}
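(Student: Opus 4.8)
The plan is to prove part (1) in full and to obtain (2) by a dual argument, after which (3) follows at once by combining them. Two elementary principles will do most of the work. The first is \emph{composition of approximations}: if $g\colon Z_0\to C$ is a right $\Z$-approximation and $f\colon X_0\to Z_0$ is a right $\X$-approximation with $X_0\in\X\subset\Z$ and $Z_0\in\Z$, then $g\circ f$ is a right $\X$-approximation of $C$, since every $Y\in\X$ lies in $\Z$, so any $Y\to C$ first factors through $g$ and then through $f$. The second is that the quotient functor $\pi\colon\Z\to\mathfrak{U}$ is the identity on objects and sends a morphism to its class modulo $\D$, and that morphisms of $\Z$ are just morphisms of $\C$ between objects of $\Z$.

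For the \emph{only if} direction of (1), suppose $\X$ is contravariantly finite in $\C$. Every object of $\mathfrak{U}$ is an object $C\in\Z$, and a right $\X$-approximation $f\colon X\to C$ in $\C$ has $X\in\X\subset\Z$, so it is a morphism of $\Z$. Then $\pi(f)\colon\overline{X}\to\overline{C}$ is a right $\overline{\X}$-approximation in $\mathfrak{U}$: given $\overline{g}\colon\overline{Y}\to\overline{C}$ with $Y\in\X$, lift it to $g\colon Y\to C$ in $\Z$, factor $g=f\circ h$ in $\C$ using that $f$ is a right $\X$-approximation, and apply $\pi$ to get $\overline{g}=\pi(f)\circ\overline{h}$. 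Hence $\overline{\X}$ is contravariantly finite. (This implication does not use the rigidity hypotheses.)

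For the \emph{if} direction, assume $\overline{\X}$ is contravariantly finite in $\mathfrak{U}$. By the composition principle it suffices to produce, for every $C\in\C$, a right $\Z$-approximation $Z_0\to C$ and a right $\X$-approximation of $Z_0$. For the latter, fix $C\in\Z$ and choose a right $\overline{\X}$-approximation $\overline{f}\colon\overline{X}\to\overline{C}$ in $\mathfrak{U}$ with $X\in\X$; let $f\colon X\to C$ be a representative in $\C$. Since $\D$ is functorially finite, pick a right $\D$-approximation $d\colon D_C\to C$, and note $X\oplus D_C\in\X$ because $\D\subset\X$. I claim $(f,\,d)\colon X\oplus D_C\to C$ is a right $\X$-approximation in $\C$: for any $g\colon Y\to C$ with $Y\in\X$, the identity $\overline{g}=\overline{f}\circ\overline{h}$ in $\mathfrak{U}$ means $g-f\circ h=\beta\circ\alpha$ with $\alpha\colon Y\to D'$, $\beta\colon D'\to C$ and $D'\in\D$; as $d$ is a right $\D$-approximation, $\beta$ factors through $d$, and combining the two pieces exhibits $g$ as a factorization through $(f,\,d)$. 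This augmentation by a $\D$-approximation is exactly what absorbs the morphisms that become zero in $\mathfrak{U}$.

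The main obstacle is the remaining input of the \emph{if} direction: the existence of a right $\Z$-approximation of an \emph{arbitrary} $C\in\C$, that is, the contravariant finiteness of $\Z$ in $\C$ (handling objects not already in $\Z$). This is where the hypotheses that $\D$ is $(n+1)$-rigid and $\mathbb{S}\D=\D[n+1]$ are essential: writing $\Z=\bigcap_{i=1}^{n}{}^{\perp}\D[i]$, one constructs $Z_0\to C$ by iteratively taking left $\add(\bigoplus_{i=1}^{n}\D[i])$-approximations and forming cones, the rigidity ensuring that $\Ext^{i}(-,\D)$ is killed in degrees $1\le i\le n$ after finitely many steps; this is the functorial finiteness of $\Z$ that underlies the Iyama–Yoshino construction of $\mathfrak{U}$ invoked in Lemma \ref{c}. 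Granting it, the composition principle completes (1). Part (2) is entirely dual: replace right approximations by left ones, $d$ by a left $\D$-approximation, and use that $\Z=\bigcap_{i=1}^{n}\D[-i]^{\perp}$ is covariantly finite, the condition $\mathbb{S}\D=\D[n+1]$ guaranteeing that the two descriptions of $\Z$ agree. Finally, (3) is immediate from (1) and (2).
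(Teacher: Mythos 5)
Your proof is correct and follows essentially the same route as the paper's: the forward direction pushes a right $\X$-approximation to the quotient, and the converse composes a right $\Z$-approximation (whose existence both you and the paper delegate to the Iyama--Yoshino results on functorial finiteness of $\Z$) with a right $\X$-approximation of a $\Z$-object obtained from the $\overline{\X}$-approximation in $\mathfrak{U}$ augmented by a right $\D$-approximation that absorbs the morphisms factoring through $\D$. If anything, fixing that $\D$-approximation in advance makes your version slightly cleaner than the paper's, where the summand $D$ of the approximating object is only produced after the test morphism $h$ has been chosen.
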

\begin{proof}
We only prove the first assertion, the others can be proved similarly.

Suppose $\X$ is a contravariantly finite subcategory of $\C$. Let $Z$ be an object of $\Z$. Since $\X$ is contravariantly finite, there exists an object $X\in\X$ such that $ X\stackrel{f} \rightarrow Z$ is a right $\X$-approximation of $Z$ in $\C$. It is easy to check that $ X\stackrel{\overline{f}} \rightarrow Z$ is a right $\overline{\X}$-approximation of $Z$ in $\mathfrak{U}$.

Now suppose $\overline{\X}$ is a contravariantly finite subcategory of $\mathfrak{U}$. Since $\D$ is a functorially finite subcategory of $\C$, so is $\Z$ by Proposition 2.4 in \cite{IY} and its dual. Let $C$ be an object of $\C$. Since $\Z$ is a functorially finite subcategory of $\C$, there exists an object $Z\in\Z$ such that $Z\stackrel{f} \rightarrow C$ is a right $\Z$-approximation of $C$ in $\C$. By assumption, there exists an object $X\in\X$ such that $ X\stackrel{\overline{g}} \rightarrow Z$ is a right $\overline{\X}$-approximation of $Z$ in $\mathfrak{U}$. For any morphism $X^\prime\stackrel{h} \rightarrow C$ with $X^\prime\in\X\subset\Z$, since $Z\stackrel{f} \rightarrow C$ is a right $\Z$-approximation of $C$ in $\C$, there exists a morphism $X^\prime\stackrel{\ell}\rightarrow Z$ such that $h=f\circ\ell$. Since $X\stackrel{\overline{g}} \rightarrow Z$ is a right $\overline{\X}$-approximation of $Z$ in $\mathfrak{U}$,
there exists a morphism $X^\prime\stackrel{\overline{k}}\rightarrow X$ such that $\overline{l}=\overline{g}\circ\overline{k}$, i.e. $\overline{l}-\overline{g}\circ\overline{k}=\overline{0}$. So there exists an object $D\in\D$ and two morphisms $X^\prime\stackrel{a_1}\rightarrow D$ and $D\stackrel{a_2}\rightarrow Z$ such that $\ell=g\circ k+a_2\circ a_1$. We will show that $X\oplus D\xrightarrow{(f\circ g,f\circ a_2)} C$ is a right $\X$-approximation of $C$ in $\C$ and thus complete the proof. For any morphism $X^\prime\stackrel{h} \rightarrow C$ with $X^\prime\in\X$, we have found a morphism $X^\prime\xrightarrow{\binom{k}{a_1}}X \oplus D$ such that $h=f\circ\ell=f\circ(g\circ k+a_2\circ a_1)=(f\circ g,f\circ a_2)\circ\binom{k}{a_1}$, so $X\oplus D\xrightarrow{(f\circ g,f\circ a_2)} C$ is a right $\X$-approximation of $C$ in $\C$.
\end{proof}
\begin{theorem}\label{d}
Let $(\X,\Y)$ be an $n$-cotorsion pair in $\C$ with $\D\subset\I(\X)$. Then $(\overline{\X},\overline{\Y})$ is an $n$-cotorsion pair in $\mathfrak{U}$ with $\I(\overline{\X})=\overline{\I(\X)}$. Moreover, the map $(\X,\Y)\mapsto(\overline{\X},\overline{\Y})$ is a bijection from the set of $n$-cotorsion pair in $\C$ whose cores contain $\D$ to the set of $n$-cotorsion pair in $\mathfrak{U}$.
\end{theorem}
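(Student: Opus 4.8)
The plan is to transport the defining orthogonality conditions back and forth between $\C$ and $\mathfrak{U}$ through the Hom-isomorphism of Lemma \ref{c}(2), to read off finiteness directly from Lemma \ref{a}, and to build the inverse correspondence at the level of objects. By Lemma \ref{b} the hypothesis $\D\subset\I(\X)$ guarantees $\D\subset\X\subset\Z$ and $\D\subset\Y\subset\Z$, so $\overline{\X}$ and $\overline{\Y}$ really are subcategories of $\mathfrak{U}$. To verify condition~(1) for $(\overline{\X},\overline{\Y})$, I would fix $M\in\Z$ representing an object $\overline{M}$ of $\mathfrak{U}$. Lemma \ref{c}(2) gives $\mathfrak{U}(\overline{M},\overline{Y}\langle i\rangle)\cong\C(M,Y[i])$ for every $Y\in\Y\subset\Z$ and $1\leq i\leq n$; hence $\overline{M}\in\bigcap_{i=1}^{n}{}^{\bot}\overline{\Y}\langle i\rangle$ if and only if $\C(M,Y[i])=0$ for all such $Y$ and $i$, if and only if $M\in\bigcap_{i=1}^{n}{}^{\bot}\Y[i]=\X$, if and only if $\overline{M}\in\overline{\X}$. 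Condition~(2), $\overline{\Y}=\bigcap_{i=1}^{n}\overline{\X}\langle-i\rangle^{\bot}$, is established by the dual computation.

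Finiteness (condition~(3)) is immediate: Lemma \ref{a}(1) turns contravariant finiteness of $\X$ into that of $\overline{\X}$, and Lemma \ref{a}(2) turns covariant finiteness of $\Y$ into that of $\overline{\Y}$. Thus $(\overline{\X},\overline{\Y})$ is an $n$-cotorsion pair in $\mathfrak{U}$. For the core, an object of $\Z$ lies in $\overline{\X}\cap\overline{\Y}$ exactly when it lies in $\X\cap\Y=\I(\X)$, so $\I(\overline{\X})=\overline{\X}\cap\overline{\Y}=\overline{\I(\X)}$.

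It remains to prove the map is a bijection, and here lies the main obstacle. Given an $n$-cotorsion pair $(\mathcal{A},\mathcal{B})$ in $\mathfrak{U}$, I would let $\X'$ and $\Y'$ be the subcategories of $\C$ spanned by the objects of $\mathcal{A}$ and $\mathcal{B}$; since the zero object of $\mathfrak{U}$ is represented by $\D$, both contain $\D$, and both sit inside $\Z$. The hard part is that conditions~(1)--(2) for $(\X',\Y')$ are orthogonality statements over all of $\C$, whereas the isomorphism of Lemma \ref{c}(2) is only available inside $\Z$. I would resolve this by first forcing membership in $\Z$: if $M\in\C$ satisfies $\C(M,Y[i])=0$ for all $Y\in\Y'$ and $1\leq i\leq n$, then in particular $\C(M,\D[i])=0$, so $M\in\bigcap_{i=1}^{n}{}^{\bot}\D[i]=\Z$; now Lemma \ref{c}(2) applies and transports the vanishing to $\mathfrak{U}$, giving $\overline{M}\in\bigcap_{i=1}^{n}{}^{\bot}\mathcal{B}\langle i\rangle=\mathcal{A}$, whence $M\in\X'$. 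The reverse inclusion and the dual statement for $\Y'$ follow by the same reduction (using $\D\subset\X'$ in the dual case), and finiteness again from Lemma \ref{a}; thus $(\X',\Y')$ is an $n$-cotorsion pair in $\C$ with $\D\subset\I(\X')$. Finally, the two assignments are mutually inverse because each is the identity on the objects of $\Z$, which completes the proof of the bijection.
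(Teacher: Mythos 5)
Your proposal is correct and follows essentially the same route as the paper's proof: Lemma \ref{b} to place $\X$ and $\Y$ between $\D$ and $\Z$, the Hom-isomorphism of Lemma \ref{c}(2) to transport the orthogonality conditions, and Lemma \ref{a} for the finiteness conditions. The only difference is that you spell out the inverse of the bijection (in particular the reduction to $\Z$ via $\D\subset\Y'$), which the paper dismisses as obvious; your version is the more complete one.
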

\begin{proof}
Since $(\X,\Y)$ is an $n$-cotorsion pair in $\C$ with $\D\subset\I(\X)$, we have $\D\subset\X\subset\Z$ and $\D\subset\Y\subset\Z$ by Lemma \ref{b}. So $\overline{\X}$ and $\overline{\Y}$ are subcategories of $\mathfrak{U}$. By Lemma \ref{c}, there is an isomorphism $\mathfrak{U}(\X,\Y\langle i\rangle)\cong\C(\X,\Y[i])$ for any $1\leq i\leq n$, so $\X=\bigcap\limits_{i=1}^{n}{^\bot}\Y[i]$ for all $1\leq i\leq n$ if and only if $\overline{\X}=\bigcap\limits_{i=1}^{n}{^\bot}\overline{\Y}\langle i\rangle$ for all $1\leq i\leq n$. Moreover, By Lemma \ref{a}, we get $(\overline{\X},\overline{\Y})$ is an $n$-cotorsion pair in $\mathfrak{U}$. In this case, we have  $\I(\overline{\X})=\overline{\X}\cap\overline{\Y}=\overline{\X\cap\Y}=\overline{\I(\X)}$.

The bijection from the set of $n$-cotorsion pair in $\C$ whose cores contain $\D$ to the set of $n$-cotorsion pair in $\Z$ is obvious by Lemma \ref{b}, Lemma \ref{c} and Lemma \ref{a}.
\end{proof}

\subsection{Mutation of $n$-cotorsion pairs}
Let $\C$ be a Hom-finite Krull-Schmidt triangulated category over $K$ with a Serre functor $\mathbb{S}$. In this subsection, we study mutation of $n$-cotorsion pairs.

\begin{definition}\label{e}
Fix a functorially finite $(n+1)$-rigid subcategory $\D$ of $\C$. For a subcategory $\X$ of $\C$, put
$$\mu^{-1}_{\D}(\X):=(\D\ast\X[1])\cap\bigcap\limits_{i=1}^{n}{^\bot}\D[i].$$
That is, $\mu^{-1}_{\D}(\X)$ consists of all $M\in\C$ such that there exists a triangle
$$X\stackrel{f}\rightarrow D\rightarrow M\rightarrow X[1]$$
with $X\in\X$ and a left $\D$-approximation $f$. Dually, for a subcategory $\Y$ of $\C$, put
$$\mu_{\D}(\Y):=(\Y[-1]\ast\D)\cap\bigcap\limits_{i=1}^{n}\D[-i]^\perp.$$
That is, $\mu_{\D}(\Y)$ consists of all $M\in\C$ such that there exists a triangle
$$M\rightarrow D\stackrel{g} \rightarrow Y\rightarrow M$$
with  $Y\in\Y$ and a right $\D$-approximation  $g$.

 In this case, $\mu^{-1}_{\D}(\X)$ is called the forward $\D$-mutation of $\X$ and $\mu_{\D}(\Y)$ is called the backward $\D$-mutation of $\Y$.
\end{definition}

\begin{remark}
We also have that $\mu_{\D}(\D)=\D=\mu^{-1}_{\D}(\D)$. When $\D=0$, we have $\mu^{-1}_{\D}(\M)=\M[1]$ and $\mu_{\D}(\M)=\M[-1]$. When $n=1$, our definition of mutation is just Iyama-Yoshino's definition (see Definition \ref{h}). It's important to note that the concept of  mutation  mentioned later specifically refers to our definition, which is Definition \ref{e}.
\end{remark}

The following result was proved in \cite[Proposition 2.7]{IY} for the case $n=1$. But their proof
can be applied for the general case without any change.

\begin{proposition}
Let $\D$ be a functorially finite $(n+1)$-rigid subcategory of $\C$  satisfying  $\mathbb{S}\D=\D[n+1]$ and $\Z=\bigcap\limits_{i=1}^{n}\D[-i]^\perp=\bigcap\limits_{i=1}^{n}{^\bot}\D[i]$. Then the maps $\mu^{-1}_{\D}(-)$ and $\mu_{\D}(-)$ are mutually inverse on the set of subcategories $\M$ of $\C$ satisfying $\D\subset\M\subset\Z$.
\end{proposition}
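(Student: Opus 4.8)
The plan is to recognize both mutation operations as the suspension and desuspension functors of the subfactor triangulated category $\mathfrak{U}=\Z/\D$, and then read off mutual invertibility from the fact that $\langle 1\rangle$ and $\langle -1\rangle$ are mutually inverse autoequivalences of $\mathfrak{U}$. Recall from Lemma \ref{c}(1) that $\mathfrak{U}$ is triangulated, with suspension $\langle 1\rangle$ defined by completing a left $\D$-approximation $X\to D$ to a triangle $X\to D\to X\langle 1\rangle\to X[1]$; dually, $\langle -1\rangle$ is obtained from a right $\D$-approximation $D\to Y$ with cone $Y\langle -1\rangle$. These are exactly the defining triangles of $\mu^{-1}_{\D}$ and $\mu_{\D}$ in Definition \ref{e}, so at the level of objects, for $X\in\M$ the object $\mu^{-1}_{\D}(X)$ is precisely $X\langle 1\rangle$ and $\mu_{\D}(Y)$ is $Y\langle -1\rangle$.

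First I would check well-definedness, namely that $\mu^{-1}_{\D}$ and $\mu_{\D}$ carry the set $\{\M:\D\subset\M\subset\Z\}$ to itself. Containment in $\Z$ is built into Definition \ref{e} via the intersection with $\bigcap_{i=1}^{n}{}^{\perp}\D[i]=\Z$, and $\D\subset\mu^{-1}_{\D}(\M)$ follows from the trivial triangle $0\to D\xrightarrow{\mathrm{id}}D\to 0$ with $0\in\M$. The substantive point is that the cone $M$ of a left $\D$-approximation $X\to D$ with $X\in\M\subset\Z$ again lies in $\Z$: applying $\Hom_{\C}(-,\D[i])$ to the triangle $X\to D\to M\to X[1]$ and using that $X\in\Z$, that $\D$ is $(n+1)$-rigid (so $\Hom_{\C}(\D,\D[i])=0$ for $1\le i\le n$), and that $X\to D$ is a left $\D$-approximation (giving surjectivity of $\Hom_{\C}(D,\D)\to\Hom_{\C}(X,\D)$), one obtains $\Hom_{\C}(M,\D[i])=0$ for all $1\le i\le n$. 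I would then upgrade the object-level identification to a statement about subcategories: since the assignment $\W\mapsto\overline{\W}$ is a bijection between subcategories $\W$ with $\D\subset\W\subset\Z$ and subcategories of $\mathfrak{U}$, and since $\overline{\mu^{-1}_{\D}(\M)}=\overline{\M}\langle 1\rangle$ and $\overline{\mu_{\D}(\M)}=\overline{\M}\langle -1\rangle$ as essential images under the equivalences $\langle\pm 1\rangle$, mutual invertibility of $\mu^{-1}_{\D}$ and $\mu_{\D}$ follows from $\langle 1\rangle\circ\langle -1\rangle=\mathrm{id}_{\mathfrak{U}}=\langle -1\rangle\circ\langle 1\rangle$.

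To make the composite $\mu_{\D}\circ\mu^{-1}_{\D}=\mathrm{id}$ concrete (the reverse composite being dual), I would prove the two inclusions directly. For $\M\subseteq\mu_{\D}(\mu^{-1}_{\D}(\M))$: given $M\in\M$, complete a left $\D$-approximation $M\xrightarrow{f}D$ to a triangle $M\xrightarrow{f}D\xrightarrow{p}N\to M[1]$, so that $N\in\mu^{-1}_{\D}(\M)$; I then show $p$ is a right $\D$-approximation of $N$ by checking $\Hom_{\C}(D_0,M[1])=0$ for all $D_0\in\D$, via $\Hom_{\C}(D_0,M[1])\cong D\Hom_{\C}(M[1],\mathbb{S}D_0)=D\Hom_{\C}(M,D_0[n])$, which vanishes because $\mathbb{S}D_0=D_0[n+1]$ and $M\in\Z\subseteq{}^{\perp}\D[n]$. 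This exhibits $M\in\mu_{\D}(N)\subseteq\mu_{\D}(\mu^{-1}_{\D}(\M))$. For the reverse inclusion, any $M\in\mu_{\D}(\mu^{-1}_{\D}(\M))$ satisfies $M\cong X\langle 1\rangle\langle -1\rangle\cong X$ in $\mathfrak{U}$ for some $X\in\M$, hence $M\oplus D_1\cong X\oplus D_2$ in $\C$ with $D_1,D_2\in\D$; since $\D\subset\M$ and $\M$ is closed under direct summands, this forces $M\in\M$.

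The main obstacle I anticipate is the interface between the object-level shift identification and the subcategory-level statement, together with the two approximation checks that make the triangles genuinely inverse to each other: concretely, verifying that the cone of a left $\D$-approximation lands in $\Z$ and that its connecting map is a right $\D$-approximation. Both hinge on the rigidity of $\D$ and the compatibility $\mathbb{S}\D=\D[n+1]$ with the Serre functor, and it is exactly here that the general $n$ enters through the range $1\le i\le n$; the hypothesis $\D\subset\M$ plays the essential role of closing up the inverse construction under the $\D$-summands that become invisible in $\mathfrak{U}$.
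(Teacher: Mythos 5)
Your argument is correct, but note that the paper itself offers no proof of this proposition: it simply asserts that the proof of \cite[Proposition 2.7]{IY} for $n=1$ carries over verbatim, so there is no internal argument to compare against beyond that citation. What you have written is essentially a reconstruction of the Iyama--Yoshino argument in the general-$n$ setting: the substantive computations --- that the cone of a left $\D$-approximation of an object of $\Z$ again lies in $\Z$ (using $(n+1)$-rigidity and the approximation property), and that the connecting map $D\to N$ is automatically a right $\D$-approximation because $\Hom_{\C}(\D,M[1])\cong D\Hom_{\C}(M,\D[n])=0$ via $\mathbb{S}\D=\D[n+1]$ --- are exactly where the hypotheses enter, and you have them right; this is also precisely where the range $1\le i\le n$ replaces the $n=1$ case. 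The one point worth flagging is your appeal to Lemma \ref{c}(1) (the triangulated structure of $\mathfrak{U}$ with mutually inverse $\langle\pm1\rangle$) for the inclusion $\mu_{\D}(\mu^{-1}_{\D}(\M))\subseteq\M$: in the source \cite{IY} that result is established \emph{after}, and using, the present proposition, so a purist would object to the order of dependence. The objection is easily dissolved, since the only fact you actually need there is that two cocones of right $\D$-approximations of the same object of $\Z$ become isomorphic after adding summands from $\D$ (a direct consequence of the approximation property and the Krull--Schmidt assumption), not the full triangulated structure of $\mathfrak{U}$; if you replace the invocation of $\langle1\rangle\circ\langle-1\rangle=\mathrm{id}$ by that elementary comparison, your proof is self-contained and matches the cited one in substance. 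The closure step $M\oplus D_1\cong X\oplus D_2\Rightarrow M\in\M$, using $\D\subset\M$ and closure under summands, is exactly the role the hypothesis $\D\subset\M$ is meant to play.
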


Now we give the main result of this paper.

\begin{theorem}\label{main}
Let $(\X,\Y)$ be an $n$-cotorsion pair in $\C$ and $\D\subset\I(\X)$ be a functorially finite subcategory of $\C$ satisfying  $\mathbb{S}\D=\D[n+1]$. Then the pairs $$(\mu^{-1}_{\D}(\X),\mu^{-1}_{\D}(\Y))\hspace{2mm}\mbox{and}\hspace{3mm}(\mu_{\D}(\X),\mu_{\D}(\Y))$$ are  $n$-cotorsion pairs in $\C$ and we have
$$\I(\mu^{-1}_{\D}(\X))=\mu^{-1}_{\D}(\I(\X))\hspace{2mm}\mbox{and}\hspace{3mm}\I(\mu_{\D}(\X))=\mu_{\D}(\I(\X)).$$
\end{theorem}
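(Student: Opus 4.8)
The plan is to transport the entire statement into the subfactor triangulated category $\mathfrak{U}=\Z/\D$, where, as I will show, forward mutation $\mu^{-1}_{\D}$ becomes the shift $\langle 1\rangle$ and backward mutation $\mu_{\D}$ becomes $\langle -1\rangle$; the conclusion then follows from the bijection of Theorem~\ref{d}. I treat only $\mu^{-1}_{\D}$, the backward case being identical with $\langle -1\rangle$ in place of $\langle 1\rangle$. Since $\D\subset\I(\X)$, Lemma~\ref{b} gives $\D\subset\X\subset\Z$ and $\D\subset\Y\subset\Z$, so by Theorem~\ref{d} the pair $(\overline{\X},\overline{\Y})$ is an $n$-cotorsion pair in $\mathfrak{U}$ with core $\overline{\I(\X)}$. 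Moreover, by definition $\mu^{-1}_{\D}(\X)\subset\bigcap_{i=1}^{n}{}^{\bot}\D[i]=\Z$, while monotonicity of $\mu^{-1}_{\D}$ together with $\mu^{-1}_{\D}(\D)=\D$ gives $\D\subset\mu^{-1}_{\D}(\X)$ (and likewise for $\Y$); hence $\overline{\mu^{-1}_{\D}(\X)}$ and $\overline{\mu^{-1}_{\D}(\Y)}$ are defined.

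The key step is the identification $\overline{\mu^{-1}_{\D}(\X)}=\overline{\X}\langle 1\rangle$ (and similarly for $\Y$). This is a matter of comparing definitions: an object $M$ belongs to $\mu^{-1}_{\D}(\X)$ exactly when it fits into a triangle $X\xrightarrow{f}D\to M\to X[1]$ with $X\in\X$ and $f$ a left $\D$-approximation, which is precisely the triangle used in the construction of the shift $X\langle 1\rangle$ in $\mathfrak{U}$. Hence the objects of $\mu^{-1}_{\D}(\X)$, viewed in $\mathfrak{U}$, are exactly those of $\overline{\X}\langle 1\rangle$, and the preceding proposition (that $\mu^{-1}_{\D}$ and $\mu_{\D}$ are mutually inverse on subcategories between $\D$ and $\Z$) guarantees that $\mu^{-1}_{\D}(\X)$ is an honest subcategory lying between $\D$ and $\Z$, so that this equality of objects upgrades to an equality of subcategories under $\overline{(-)}$.

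With this in hand the conclusion is immediate. The shift $\langle 1\rangle$ is an auto-equivalence of the triangulated category $\mathfrak{U}$ that commutes with every $\langle i\rangle$ and preserves Hom-orthogonality, intersections, and contravariant and covariant finiteness; applying it to the three defining conditions of the $n$-cotorsion pair $(\overline{\X},\overline{\Y})$ shows that $(\overline{\X}\langle 1\rangle,\overline{\Y}\langle 1\rangle)$ is again an $n$-cotorsion pair in $\mathfrak{U}$, with core $(\overline{\X}\cap\overline{\Y})\langle 1\rangle=\overline{\I(\X)}\langle 1\rangle$. By the previous step this pair equals $(\overline{\mu^{-1}_{\D}(\X)},\overline{\mu^{-1}_{\D}(\Y)})$, so the reverse direction of the bijection in Theorem~\ref{d} returns an $n$-cotorsion pair $(\mu^{-1}_{\D}(\X),\mu^{-1}_{\D}(\Y))$ in $\C$. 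Finally, computing the core in $\mathfrak{U}$ gives $\overline{\I(\mu^{-1}_{\D}(\X))}=\overline{\mu^{-1}_{\D}(\X)}\cap\overline{\mu^{-1}_{\D}(\Y)}=(\overline{\X}\cap\overline{\Y})\langle 1\rangle=\overline{\mu^{-1}_{\D}(\I(\X))}$, and injectivity of $\overline{(-)}$ yields $\I(\mu^{-1}_{\D}(\X))=\mu^{-1}_{\D}(\I(\X))$.

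I expect the only delicate point to be the identification in the second paragraph: one must check not merely that the defining triangles coincide, but that the summand-closure and well-definedness built into the subcategory $\mu^{-1}_{\D}(\X)$ match the image $\overline{\X}\langle 1\rangle$ in $\mathfrak{U}$. This is exactly where the Iyama--Yoshino result on $\mu^{-1}_{\D}$ and $\mu_{\D}$ being mutually inverse does the real work; everything else is the routine transport of structure along an equivalence.
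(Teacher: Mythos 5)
Your proposal is correct and follows essentially the same route as the paper: pass to the subfactor category $\mathfrak{U}=\Z/\D$ via Theorem~\ref{d}, identify $\mu^{-1}_{\D}$ with the shift $\langle 1\rangle$ there (the paper cites \cite[Proposition 4.4]{IY} for exactly the identification you verify by comparing triangles), observe that the shifted pair is again an $n$-cotorsion pair, and transport back through the bijection. The only cosmetic difference is that the paper phrases the shift step as a ``forward $0$-mutation'' rather than as applying an auto-equivalence, which amounts to the same thing.
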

\begin{proof}
By Remark \ref{l}, we have that $\I(\X)$ is $(n+1)$-rigid, then so is $\D$. So both $\mu^{-1}_{\D}(\X)$ and $\mu^{-1}_{\D}(\Y)$ are well-defined. We denote $\mu^{-1}_{\D}(\X)$, $\mu^{-1}_{\D}(\Y)$ and $\mu^{-1}_{\D}(\I(\X))$ by
$\X^\prime$,  $\Y^\prime$ and $\I^\prime$ respectively. As in the previous subsection, we denote by $\Z=\bigcap\limits_{i=1}^{n}\D[-i]^\perp=\bigcap\limits_{i=1}^{n}{^\bot}\D[i]$ and $\mathfrak{U}=\Z/\D$ the subfactor triangulated category. By Theorem \ref{d}, we have that $(\overline{\X},\overline{\Y})$ is an $n$-cotorsion pair in $\mathfrak{U}$ and $\I(\overline{\X})=\overline{\I(\X)}$. Note that its  forward 0-mutation $(\overline{\X}\langle1\rangle,\overline{\Y}\langle1\rangle)$ is also an $n$-cotorsion pair in $\mathfrak{U}$. By \cite[Proposition 4.4]{IY}, we have that $\overline{\X}\langle1\rangle=\overline{\X^\prime}$ and $\overline{\Y}\langle1\rangle=\overline{\Y^\prime}$. Then by Theorem \ref{d}, we have that the pair $(\X^\prime,\Y^\prime)$ is an $n$-cotorsion pair in $\C$ and $\I(\overline{\X^\prime})=\overline{\I(\X^\prime)}$. This means that $\I^\prime=\overline{\I(\X)}\langle1\rangle=\I(\overline{\X})\langle1\rangle=\I(\overline{\X})\langle1\rangle=
\I(\overline{\X^\prime})=\overline{\I(\X^\prime)}$. Therefore, $\I(\mu^{-1}_{\D}(\X))=\mu^{-1}_{\D}(\I(\X))$. The assertion for $\I(\mu_{\D}(\X))=\mu_{\D}(\I(\X))$ can be proved similarly.
\end{proof}
\section{A geometric realization of mutation of $n$-cotorsion pairs in $n$-cluster categories of type $A_m$}
In this section, we recall from \cite{BM,L,CZ2} a geometric description of $n$-cluster categories of type $A_m$, denoted by $\C_{A_{m}}^{n}$. Then we give a geometric characterization of $n$-cotorsion pairs in $\C_{A_{m}}^{n}$ via certain configurations of $n$-diagonals, and introduce the rotation of such combinatorial models, which can give a geometric realization of mutation of $n$-cotorsion pairs in $\C_{A_{m}}^{n}$.
\subsection{$n$-cluster categories of type $A_m$ and its geometric realization}
Let $H$ be a finite dimensional hereditary algebra over a field $K$, and $\H=\D^{b}(H)$ be the bounded derived category of $H$. The Auslander-Reiten translate of $\H$ is denoted by $\tau$ and the shift functor of $\H$ is denoted by $[1]$. Fix an integer $m\in\mathbb{Z}_{\geq 1}$ throughout this subsection.
\begin{definition}[\cite{K}]
The orbit category $\D^{b}(H)/\tau^{-1}[n]$ is called the $n$-cluster category of $\H$, and is denoted by $\C^{n}(\H)$.
\end{definition}
Specially, when $H=KQ$ is the path algebra over a Dynkin quiver of type $A_{m}$, the $n$-cluster category $\C^{n}(\H)$, denoted by $\C_{A_{m}}^{n}$, is called the $n$-cluster category of type $A_{m}$, which we consider in this section.

Now we summarize some known facts about $\C_{A_{m}}^{n}$ from \cite{BMRRT,K}.
\begin{proposition}
\begin{enumerate}
  \item The category $\C_{A_{m}}^{n}$ has Auslander-Reiten triangles and Serre functor $\mathbb{S}=\tau[1]$, where $\tau$ is the AR-translate and $[1]$ is the shift functor in $\C_{A_{m}}^{n}$.
  \item The category  $\C_{A_{m}}^{n}$ is a Krull-Schmidt and $(n+1)$-Calabi-Yau triangulated category.
\end{enumerate}
\end{proposition}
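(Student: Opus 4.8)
The plan is to transfer all four assertions from the ambient derived category $\H=\D^b(H)$, $H=KA_m$, through the orbit construction $\C_{A_{m}}^{n}=\H/(\tau^{-1}[n])$. The only facts I need about $\H$ are classical: being the bounded derived category of a finite-dimensional hereditary algebra, $\H$ is a Hom-finite Krull-Schmidt triangulated category with Auslander-Reiten triangles, and its Serre functor is $\tau[1]$, i.e. there is a bifunctorial isomorphism $D\Hom_{\H}(X,Y)\cong\Hom_{\H}(Y,\tau X[1])$, where $\tau$ is the AR-translate of $\H$. Everything else should follow from these inputs together with the structure of the orbit category.

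First I would invoke Keller's theorem \cite{K} to equip $\C_{A_{m}}^{n}$ with a triangulated structure for which the canonical projection $\pi\colon\H\rightarrow\C_{A_{m}}^{n}$ is a triangle functor. Here one must check that the autoequivalence $F:=\tau^{-1}[n]$ meets the hypotheses of that theorem: $H$ is hereditary, and $F$ is a standard autoequivalence acting freely on the indecomposable objects of $\H$. I would then record the defining Hom-formula
\[
\Hom_{\C_{A_{m}}^{n}}(X,Y)\cong\bigoplus_{i\in\mathbb{Z}}\Hom_{\H}(X,F^{i}Y),
\]
which drives the remaining steps. Hom-finiteness is then immediate, since the AR-quiver of $\H$ in type $A_m$ is locally finite and $F$ shifts homological degree, so only finitely many summands on the right-hand side are nonzero. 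For the Krull-Schmidt property I would verify that $F$ fixes no nonzero indecomposable; consequently, for an indecomposable $X$ the $i=0$ summand $\End_{\H}(X)$ is local and controls $\End_{\C_{A_{m}}^{n}}(X)$, so indecomposables have local endomorphism rings, and a Hom-finite additive category with this property is Krull-Schmidt. This establishes the Krull-Schmidt part of assertion (2).

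To obtain (1), I would note that $\tau$ and $[1]$ commute in $\H$, hence $\tau[1]$ commutes with $F=\tau^{-1}[n]$ and therefore descends to an autoequivalence $\mathbb{S}$ of $\C_{A_{m}}^{n}$. Applying Serre duality termwise in the Hom-formula and reindexing the sum via the autoequivalence $F$ yields $D\Hom_{\C_{A_{m}}^{n}}(X,Y)\cong\Hom_{\C_{A_{m}}^{n}}(Y,\mathbb{S}X)$ with $\mathbb{S}=\tau[1]$; the existence of Auslander-Reiten triangles is then automatic for a Hom-finite Krull-Schmidt triangulated category admitting a Serre functor. Finally, in $\C_{A_{m}}^{n}$ the defining relation $F\cong\mathrm{id}$ reads $\tau\cong[n]$, whence $\mathbb{S}=\tau[1]\cong[n+1]$, that is $\mathbb{S}_{n+1}=\mathbb{S}\circ[-(n+1)]\cong\mathrm{id}$, which is precisely the statement that $\C_{A_{m}}^{n}$ is $(n+1)$-Calabi-Yau, completing (2).

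The step I expect to require the most care is the first one: pinning down that $F=\tau^{-1}[n]$ satisfies the precise hypotheses of Keller's triangulated-orbit-category theorem, and that the resulting triangulated and Krull-Schmidt structure coincides with the one used implicitly in the later sections of the paper. Once the triangulation and the Hom-formula are secured, the Serre duality computation, the AR-triangle existence, and the Calabi-Yau identity are comparatively formal consequences of the commutativity of $\tau$ and $[1]$ and of the relation $\tau\cong[n]$ in the orbit category.
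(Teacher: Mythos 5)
Your argument is correct and is precisely the standard one: the paper itself offers no proof of this proposition, merely citing Keller \cite{K} for the triangulated orbit-category structure and \cite{BMRRT} for the remaining properties, and your sketch faithfully reconstructs the content of those references (Keller's theorem applies since $H=KA_m$ is hereditary and $F=\tau^{-1}[n]$ satisfies his finiteness hypotheses; Serre duality descends termwise; AR-triangles then come from Reiten--Van den Bergh; and $\tau\cong[n]$ in the orbit category gives $\mathbb{S}\cong[n+1]$). The one step where your phrase ``controls $\End_{\C_{A_{m}}^{n}}(X)$'' hides a small verification is the Krull--Schmidt claim: one should observe that for $F=\tau^{-1}[n]$ only non-negative powers of $F$ contribute to $\End_{\C_{A_{m}}^{n}}(X)=\bigoplus_i\Hom_{\H}(X,F^iX)$ (negative-degree self-maps vanish in $\D^{b}(H)$ for $H$ hereditary), so the off-degree part is a nilpotent ideal and the endomorphism ring of an indecomposable is genuinely local.
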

Since $\C_{A_{m}}^{n}$ is $(n+1)$-Calabi-Yau, $[n+1]$ is a Serre functor. Hence any subcategory $\D$ of $\C_{A_{m}}^{n}$ satisfies the condition $\mathbb{S}\D=\D[n+1]$. We have the following useful result.
\begin{lemma}\cite[Lemma 3]{T}\label{f}
Suppose $X, Y\in\C_{A_{m}}^{n}$. Then $$\mathrm{dim}_{K}\Ext^{i}_{\C_{A_{m}}^{n}}(X,Y)=\mathrm{dim}_{K}\Ext^{n+1-i}_{\C_{A_{m}}^{n}}(Y,X)$$
for all $i=1,\ldots,n$.
\end{lemma}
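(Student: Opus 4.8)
The plan is to derive the identity directly from Serre duality together with the $(n+1)$-Calabi--Yau property of $\C_{A_{m}}^{n}$. First I would record that, since $\C_{A_{m}}^{n}$ is $(n+1)$-Calabi--Yau, the functor $\mathbb{S}_{n+1}=\mathbb{S}\circ[-(n+1)]$ is the identity, so the Serre functor is $\mathbb{S}=[n+1]$; this is exactly the observation preceding the statement that $[n+1]$ is a Serre functor. The Hom-finiteness of $\C_{A_{m}}^{n}$ guarantees that all the $\Hom$-spaces in sight are finite-dimensional $K$-vector spaces, so the $K$-dual $D=\Hom_{K}(-,K)$ preserves dimension; this is what lets us pass from an isomorphism of vector spaces to an equality of dimensions at the end.

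The core computation is then a short chain of canonical isomorphisms. Fix $1\le i\le n$ and write $\Ext^{i}_{\C_{A_{m}}^{n}}(X,Y)=\Hom_{\C_{A_{m}}^{n}}(X,Y[i])$ by definition. Applying the Serre duality isomorphism $D\Hom_{\C}(A,B)\cong\Hom_{\C}(B,\mathbb{S}A)$ with $A=X$ and $B=Y[i]$ gives
$$D\Hom_{\C_{A_{m}}^{n}}(X,Y[i])\cong\Hom_{\C_{A_{m}}^{n}}(Y[i],\mathbb{S}X)=\Hom_{\C_{A_{m}}^{n}}(Y[i],X[n+1]),$$
using $\mathbb{S}=[n+1]$. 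Since $[1]$ is an autoequivalence, applying $[-i]$ to both arguments yields
$$\Hom_{\C_{A_{m}}^{n}}(Y[i],X[n+1])\cong\Hom_{\C_{A_{m}}^{n}}(Y,X[n+1-i])=\Ext^{n+1-i}_{\C_{A_{m}}^{n}}(Y,X),$$
and here $1\le n+1-i\le n$ precisely because $1\le i\le n$, so the right-hand side is again a genuine $\Ext$ in the permitted range. Combining the two displays gives a natural isomorphism $D\Ext^{i}_{\C_{A_{m}}^{n}}(X,Y)\cong\Ext^{n+1-i}_{\C_{A_{m}}^{n}}(Y,X)$.

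Finally I would take $K$-dimensions of this last isomorphism and use that $\dim_{K}DV=\dim_{K}V$ for a finite-dimensional space $V$, which gives
$$\dim_{K}\Ext^{i}_{\C_{A_{m}}^{n}}(X,Y)=\dim_{K}\Ext^{n+1-i}_{\C_{A_{m}}^{n}}(Y,X),$$
as required. There is no serious obstacle here: the statement is a formal consequence of Serre duality in a Calabi--Yau setting, and the only points needing care are the bookkeeping of the shifts (so that the argument $Y[i]$ is handled correctly and the resulting index $n+1-i$ lands in $\{1,\dots,n\}$) and the explicit identification $\mathbb{S}=[n+1]$ coming from the $(n+1)$-Calabi--Yau condition. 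Both are routine once made explicit, which is why the result can simply be cited, but the short derivation above makes the origin of the duality transparent.
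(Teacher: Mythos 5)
Your argument is correct: it is the standard Serre-duality computation, using that $\C_{A_{m}}^{n}$ is $(n+1)$-Calabi--Yau so that $\mathbb{S}=[n+1]$, and the shift bookkeeping ($n+1-i\in\{1,\dots,n\}$) checks out. The paper gives no proof of its own, citing Thomas's Lemma 3, whose proof is exactly this derivation, so your approach matches the intended one.
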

Let $\Pi$ be a regular $(n(m+1)+2)$-gon, $m,n\in\mathbb{Z}_{\geq 1}$, with vertices numbered clockwise from 1 to $n(m+1)+2$. We regard all operations on vertices of $\Pi$ modulo $n(m+1)+2$ in this section. A diagonal of $\Pi$ is a straight line between two non-adjacent vertices $i$ and $j$ of $\Pi$, denoted by $(i,j)$. Thus $(i,j)=(j,i)$.
\begin{definition}
A diagonal $\alpha$ is called an  $n$-diagonal  if $\alpha$ divides $\Pi$ into an $(nj+2)$-gon and $(nk+2)$-gon for some integers $j,k$.
\end{definition}
\begin{remark}
For a diagonal $\alpha=(i,j)$ of $\Pi$ with $i<j$, $\alpha$ is  an  $n$-diagonal  if and only if $j-i\equiv1\mod n$.
\end{remark}
There is a translation $\tau_{n}$ on the set of $n$-diagonals, which sends the $n$-diagonal $(i,j)$ to $(i-n,j-n)$, i.e. $\tau_{n}(i,j)=(i-n,j-n)$.
\begin{definition}(\cite{L,T})
Suppose $u=(c_1,c_2), v=(d_1,d_2)$ are two $n$-diagonals in $\Pi$ with $c_1<c_2, d_1<d_2$. The $n$-diagonals $u$ and $v$ are called crossing if their endpoints are all distinct and come in the order $c_1, d_1, c_2, d_2$ when moving around the polygon in one direction or the other, i.e. $c_1<d_1<c_2<d_2$ or $d_1<c_1<d_2<c_2$.
\end{definition}
By Definition above, the $n$-diagonals $u$ and $\tau_n(u)$ always cross for all $u$ in $\Pi$, see Lemma 3.1 in \cite{L}.
\begin{proposition}\cite[Proposition 5.4]{BM}
 There exists a bijection between $n$-diagonals in $\Pi$ and indecomposable objects in  $\mathrm{ind}\;\C_{A_{m}}^{n}$.
 \end{proposition}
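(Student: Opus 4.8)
The plan is to coordinatise both sides and match them equivariantly under the Auslander–Reiten translate, deducing the bijection from a count of a fundamental domain. First I would record the indecomposable objects of $\H=\D^{b}(KA_m)$: its AR-quiver has shape $\mathbb{Z}A_m$, so every indecomposable is isomorphic to $M[k]$ for a unique indecomposable $KA_m$-module $M$ (an interval module $M_{[a,b]}$ with $1\le a\le b\le m$) and $k\in\mathbb{Z}$, with $[1]$ and $\tau$ acting by explicit shifts of these coordinates. Since $\C_{A_m}^{n}=\H/\langle\tau^{-1}[n]\rangle$, a fundamental domain for $\mathrm{ind}\,\C_{A_m}^{n}$ is
$$\{\,M[k] : M\in\mathrm{ind}(\mod KA_m),\ 0\le k\le n-1\,\}\cup\{\,P[n] : P \text{ indecomposable projective}\,\},$$
which has $n\cdot\tfrac{m(m+1)}{2}+m=\tfrac{m(n(m+1)+2)}{2}$ objects.

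Next I would count the $n$-diagonals of $\Pi$. By the Remark a chord $(i,j)$ with $i<j$ is an $n$-diagonal precisely when $j-i\equiv1\pmod n$, and as $n(m+1)+2\equiv2\pmod n$ the complementary arc satisfies the same congruence automatically, so no extra condition arises. Summing over the admissible non-adjacent differences $d=1+kn$ with $1\le k\le m$ gives
$$\sum_{k=1}^{m}\bigl(n(m+1)+2-(1+kn)\bigr)=\frac{m\bigl(n(m+1)+2\bigr)}{2}$$
$n$-diagonals, matching the previous count. Moreover $\tau_{n}$, which rotates by $-n$, acts freely on the $n$-diagonals. This is the combinatorial shadow of the fact that, by the Proposition above, $\C_{A_m}^{n}$ is $(n+1)$-Calabi–Yau, so $\mathbb{S}=[n+1]$ and hence $\tau=\mathbb{S}[-1]=[n]$ in $\C_{A_m}^{n}$.

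Finally I would build the bijection $\Phi$ by fixing it on one AR-slice, say by sending each interval module $M_{[a,b]}$ to the $n$-diagonal read off from its support, and extending it to all indecomposables by decreeing that $[1]$ corresponds to the rotation $(i,j)\mapsto(i-1,j-1)$, and hence $\tau=[n]$ to $\tau_{n}$. One then checks that $\Phi$ carries the mesh relations of $\mathbb{Z}A_m$ to the incidence and crossing pattern of $n$-diagonals, so that it is well defined and injective on the fundamental domain; surjectivity is then immediate from the equality of the two counts, and the equivariance $\Phi\circ[1]=(\text{rotation by }-1)\circ\Phi$ identifies the AR-structures.

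The step I expect to be the main obstacle is checking that $\Phi$ descends to the orbit category, i.e. that it is compatible with the defining identification $\tau^{-1}[n]\cong\mathrm{id}$ and, more delicately, with the full-turn relation: a rotation by $n(m+1)+2$ is the identity on $\Pi$, so $\Phi$ forces $[\,n(m+1)+2\,]\cong\mathrm{id}$ on $\C_{A_m}^{n}$. This is where the fractional Calabi–Yau property of $\D^{b}(KA_m)$ enters: from $\mathbb{S}^{m+1}\cong[m-1]$ one gets $\tau^{m+1}\cong[-2]$, whence in $\C_{A_m}^{n}$ (where $\tau=[n]$) one has $[n(m+1)]\cong\tau^{m+1}\cong[-2]$ and therefore $[\,n(m+1)+2\,]\cong\mathrm{id}$, exactly as the geometry demands. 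Verifying this periodicity, and that it is compatible with the chosen slice identification, is the crux; the remaining verifications are routine bookkeeping on coordinates.
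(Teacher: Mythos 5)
The paper does not actually prove this proposition: it is quoted from \cite[Proposition 5.4]{BM}, where the bijection is obtained by exhibiting an explicit isomorphism between the stable translation quiver of $n$-diagonals of $\Pi$ (with translation $\tau_n$) and the Auslander--Reiten quiver of $\C_{A_{m}}^{n}$. Your route differs in emphasis: you plan to get surjectivity for free from a cardinality count and only need injectivity on a fundamental domain. The two counts you perform are both correct: the fundamental domain $\bigcup_{k=0}^{n-1}\mathrm{ind}(\mod KA_m)[k]\cup\{P[n]: P\ \text{projective}\}$ has $n\cdot\tfrac{m(m+1)}{2}+m=\tfrac{m(n(m+1)+2)}{2}$ objects, and summing $N-d$ over the admissible differences $d=1+kn$, $1\le k\le m$, with $N=n(m+1)+2$, gives the same number. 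Since the proposition as stated only asserts the existence of a bijection between two finite sets, your counting argument already suffices; the fractional Calabi--Yau computation $[\,n(m+1)+2\,]\cong\mathrm{id}$ is also correct and is a genuine consistency check on the equivariant refinement.

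The thin spot is the step you flag yourself, plus one you do not: decreeing that $[1]$ acts as rotation by one vertex, and that the slice assignment $M_{[a,b]}\mapsto$ (diagonal read off from the support) intertwines the module-theoretic AR-translate with $\tau_n$ and sends the $n+1$ pieces of the fundamental domain to pairwise disjoint sets of $n$-diagonals. The paper's Lemma 4.10 only records $M_u[n]=M_{\tau_n u}$; that $[1]$ itself is a one-step rotation is true but is precisely the content of the Baur--Marsh quiver isomorphism (for $m$ even, $[1]$ acts on $\mathbb{Z}A_m$ as a glide reflection rather than a translation, so one must check that a one-step polygon rotation realizes exactly that glide reflection on $n$-diagonals). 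These verifications are routine with explicit coordinates, but they are where the structure-preserving content of the proposition lives; as written, your argument fully establishes the bijection as a counting statement and gives a correct blueprint, rather than a complete proof, of the natural bijection that the rest of Section 4 relies on.
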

The bijection above induces a bijection between the subcategories of $\C_{A_{m}}^{n}$ and the sets of $n$-diagonals in $\Pi$. For a subcategory $\X$ of $\C_{A_{m}}^{n}$, we denote the corresponding set of $n$-diagonals in $\Pi$ by $\XX$. For a set of $n$-diagonals $\mathfrak{X}$ in $\Pi$,  we denote the corresponding subcategory of $\C_{A_{m}}^{n}$ by $E(\mathfrak{X})$. We sometimes use an $n$-diagonal $(i,j)$ to represent an indecomposable object $M$ in $\C_{A_{m}}^{n}$ without confusion, denote by $M=(i,j)$.
\begin{example}
Suppose $n=2, m=3$. Then the AR-quiver of $\C_{A_{3}}^{2}$ is shown as Figure \ref{AR-quiver}.
\begin{figure}[h]
\centering

$$
{
\xymatrix@-7mm@C-0.17cm{
     &&(1,8)  \ar[rdd] & & (3,10) \ar[rdd] && (2,5) \ar[rdd]&& (4,7) \ar[rdd]&& (6,9)\ar[rdd] && (1,8) \\
 \\
 & (1,6)  \ar[rdd] \ar[ruu] & & (3,8) \ar[rdd]\ar[ruu] & & (5,10) \ar[rdd] \ar[ruu] & & (2,7) \ar[ruu]\ar[rdd]&& (4,9) \ar[ruu]\ar[rdd]&&(1,6) \ar[ruu]\\
 \\
  (1,4) \ar[ruu]&& (3,6) \ar[ruu] && (5,8) \ar[ruu]&&(7,10) \ar[ruu]&& (2,9) \ar[ruu]&& (1,4)\ar[ruu] && \\
}
}
$$
\caption{The AR-quiver of $\C_{A_{3}}^{2}$}
\label{AR-quiver}
\end{figure}
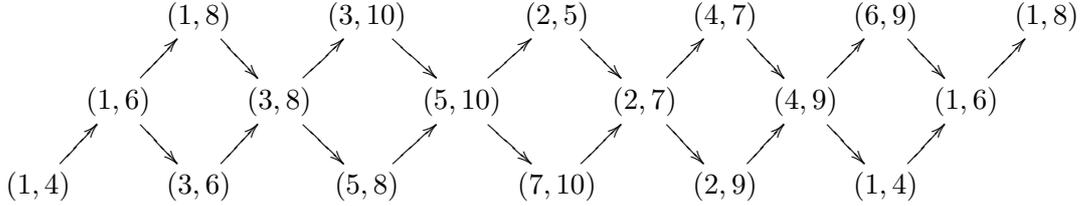
\end{example}
\begin{lemma}\cite[Proposition 2]{T}\label{g}
Suppose $u, v$ are two $n$-diagonals in $\Pi$, and $M_u, M_v$ are the corresponding indecomposable objects in $\C_{A_{m}}^{n}$. Then
\begin{itemize}
  \item [(1)] $u$ does not cross $v$ if and only if  $\Ext^{i}_{\C_{A_{m}}^{n}}(M_u,M_v)=0$ for all $1\leq i\leq n$.
  \item [(2)] $M_u[n]=M_{\tau_nu}$
\end{itemize}
\end{lemma}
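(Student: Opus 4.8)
The plan is to lift everything to the bounded derived category $\H=\D^{b}(KQ)$, where both the combinatorics of type $A_m$ and the homological algebra are completely explicit, and then descend to the orbit category $\C_{A_{m}}^{n}=\H/F$ with $F=\tau^{-1}[n]$. I will use the standard orbit-category formula
\[
\Hom_{\C_{A_{m}}^{n}}(X,Y)\cong\bigoplus_{k\in\mathbb Z}\Hom_{\H}(X,F^{k}Y)=\bigoplus_{k\in\mathbb Z}\Hom_{\H}(X,\tau^{-k}Y[nk]),
\]
together with the fact that, since $KQ$ is hereditary of Dynkin type $A_m$, every space $\Hom_{\H}(U,V[\ell])$ between indecomposables is at most one-dimensional and is nonzero for at most two consecutive values of $\ell$. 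In particular both sums above are finite. Throughout, $\tau$ denotes the AR-translate of $\H$, while I write $\tau_{\C}$ for the (induced) AR-translate of $\C_{A_{m}}^{n}$ to avoid a clash of notation.

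For (2) I would argue purely formally. The Serre functor of $\C_{A_{m}}^{n}$ is $\mathbb S=\tau_{\C}[1]$; since $\C_{A_{m}}^{n}$ is $(n+1)$-Calabi--Yau we have $\mathbb S\cong[n+1]$, and cancelling $[1]$ gives $\tau_{\C}\cong[n]$. It therefore suffices to check that, under the bijection of \cite[Proposition 5.4]{BM}, the AR-translate $\tau_{\C}$ corresponds to the rotation $\tau_n\colon(i,j)\mapsto(i-n,j-n)$; this is exactly the compatibility of that bijection with the AR-structure, which one reads off the stripe shape of the AR-quiver (for instance, in Figure \ref{AR-quiver} one sees $\tau_{\C}(3,6)=(1,4)=\tau_2(3,6)$). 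Hence $M_u[n]\cong\tau_{\C}M_u=M_{\tau_n u}$, which is (2).

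For (1) I would compute, using the orbit formula and fixing lifts $\widetilde M_u,\widetilde M_v$ of $M_u,M_v$ to $\H$,
\[
\bigoplus_{i=1}^{n}\Ext^{i}_{\C_{A_{m}}^{n}}(M_u,M_v)\cong\bigoplus_{i=1}^{n}\bigoplus_{k\in\mathbb Z}\Hom_{\H}\big(\widetilde M_u,\ \tau^{-k}\widetilde M_v[nk+i]\big).
\]
Each summand is $0$- or $1$-dimensional and is nonzero precisely when the two corresponding indecomposables of $\H$ are linked in the arc model of the derived category of type $A_m$. The substantive step is to show that, after projecting these lifted arcs back to $\Pi$, the whole right-hand side is nonzero if and only if the $n$-diagonals $u$ and $v$ cross in $\Pi$: a crossing in $\Pi$ lifts to exactly one linked pair falling inside a fundamental domain for the $F$-action, and conversely a linked lifted pair projects to a crossing. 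Lemma \ref{f} can be used to symmetrise the range of $i$, since the vanishing of $\Ext^{i}(M_u,M_v)$ for all $1\le i\le n$ is equivalent to the vanishing of $\Ext^{j}(M_v,M_u)$ for all $1\le j\le n$; combined with the evident symmetry of the crossing relation this roughly halves the casework.

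The main obstacle is precisely this last combinatorial matching. One must make the arc model of $\H$ explicit, determine for which pairs $(k,i)$ the derived Hom space $\Hom_{\H}(\widetilde M_u,\tau^{-k}\widetilde M_v[nk+i])$ is nonzero in terms of the endpoint positions, and then carry out the arithmetic modulo $n(m+1)+2$ to see that the interleaving condition $c_1<d_1<c_2<d_2$ (or its reverse) governing crossing in $\Pi$ is equivalent to the existence of exactly one surviving term. The vanishing direction (no crossing forces all the relevant Hom spaces to vanish) is the easier half; the converse, producing a genuine nonzero extension out of a crossing, is where the explicit model does the real work, and this is the computation carried out in \cite{T}.
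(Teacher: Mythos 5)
The paper does not prove this lemma at all: it is imported verbatim from \cite[Proposition 2]{T}, so there is no internal proof to compare your attempt against. Your outline is nonetheless the same strategy that the cited source actually uses --- lift to $\H=\D^{b}(KQ)$, apply the orbit-category Hom formula, and match the resulting non-vanishing pattern against the crossing combinatorics --- so the approach is not wrong. Part (2) is essentially complete, and can even be shortened: in the orbit category $\C_{A_{m}}^{n}=\D^{b}(KQ)/\tau^{-1}[n]$ the functor $\tau^{-1}[n]$ becomes isomorphic to the identity by construction, which gives $\tau\cong[n]$ directly without invoking the Serre functor or the Calabi--Yau property; all that remains is the (genuinely needed) check that the bijection of \cite[Proposition 5.4]{BM} intertwines the AR-translate with $\tau_{n}$, which you correctly isolate.

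The gap is in part (1). The equivalence between ``$u$ crosses $v$'' and the non-vanishing of some $\Ext^{i}_{\C_{A_{m}}^{n}}(M_u,M_v)$ with $1\le i\le n$ is the entire content of the statement, and your proposal reduces it to ``the combinatorial matching carried out in \cite{T}'' without performing it. To close the argument you would need to (a) fix an explicit parametrization of the indecomposables of $\D^{b}(KQ)$ in coordinates on $\mathbb{Z}A_m$, (b) determine exactly when $\Hom_{\H}(\widetilde M_u,\tau^{-k}\widetilde M_v[nk+i])\neq 0$ in those coordinates --- here your observation that hereditariness forces the relevant shift to lie in $\{0,1\}$ correctly cuts the double sum down to a bounded number of terms --- and (c) verify that the surviving inequalities are precisely the interleaving condition $c_1<d_1<c_2<d_2$ or its reverse modulo $n(m+1)+2$. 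Step (c) is asserted, not done. As written, the proposal is a correct plan plus a citation, which is no more than what the paper itself provides; if the lemma is to be taken on the authority of \cite{T}, the honest course is simply to cite it, as the authors do.
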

\subsection{$n$-cotorsion pairs in $\C_{A_{m}}^{n}$}
In this subsection, we give a geometric characterization of $n$-cotorsion pairs in $\C_{A_{m}}^{n}$.

Let  $\mathfrak{X}$ be a set of $n$-diagonals in $\Pi$. We define
$$\nc\mathfrak{X}=\{u\in\Pi\text{\;is\;an\;} n\text{-diagonal\;}|\;u \text{\;does\;not\;cross\;any\;} n\text{-diagonals\; in\;}\mathfrak{X}\}.$$
By Lemma \ref{g}, $\nc\mathfrak{X}$ corresponds to the subcategory $\bigcap\limits_{i=1}^{n}\X[-i]^\perp=\bigcap\limits_{i=1}^{n}{^\bot}\X[i]$.
\begin{theorem}\label{i}
 Let $\X$ be a subcategory of $\C_{A_{m}}^{n}$, and $\XX$ be the corresponding set of $n$-diagonals in $\Pi$. Then the following statements are equivalent.
\begin{itemize}
  \item [(1)] $(\X,\bigcap\limits_{i=1}^{n}\X[-i]^\perp)$ is an $n$-cotorsion pair.
  \item [(2)] $\XX=\nc\nc\XX$.
  \end{itemize}
\end{theorem}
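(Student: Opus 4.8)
The plan is to reduce the definition of an $n$-cotorsion pair to its single substantive condition and then read that condition off the combinatorics. First I would observe that $\C_{A_m}^n$ is of finite type, so it has only finitely many indecomposable objects and finite-dimensional $\Hom$-spaces; hence every subcategory is functorially finite. In particular, condition (3) in the definition of an $n$-cotorsion pair is automatic here. Next, setting $\Y:=\bigcap\limits_{i=1}^{n}\X[-i]^\perp$ as in the statement, condition (2) holds by construction. Therefore $(\X,\Y)$ is an $n$-cotorsion pair if and only if the remaining condition (1) holds, namely $\X=\bigcap\limits_{i=1}^{n}{^\bot}\Y[i]$. This single equality is what must be matched with $\XX=\nc\nc\XX$.

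I would then translate both quantities through the bijection between subcategories of $\C_{A_m}^n$ and sets of $n$-diagonals in $\Pi$. By the identification noted immediately before the theorem (an application of Lemma \ref{g}), for any subcategory $\W$ the set $\nc\widetilde{\W}$ corresponds to $\bigcap\limits_{i=1}^{n}\W[-i]^\perp=\bigcap\limits_{i=1}^{n}{^\bot}\W[i]$. Applying this to $\X$ gives $\YY=\nc\XX$; applying it once more to $\Y$ shows that $\bigcap\limits_{i=1}^{n}{^\bot}\Y[i]$ is the subcategory corresponding to $\nc\YY=\nc\nc\XX$. Since the assignment $\W\mapsto\widetilde{\W}$ is a bijection, the equality of subcategories $\X=\bigcap\limits_{i=1}^{n}{^\bot}\Y[i]$ is equivalent to the equality of diagonal sets $\XX=\nc\nc\XX$. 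Combining this with the previous paragraph yields the equivalence of (1) and (2).

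The argument is essentially a translation through a dictionary, so I do not expect a serious obstacle. The two points that need care are both preparatory and light: verifying that the $\Ext$-duality of Lemma \ref{f} forces $\bigcap\limits_{i=1}^{n}\W[-i]^\perp$ and $\bigcap\limits_{i=1}^{n}{^\bot}\W[i]$ to coincide, which is what makes $\nc$ well defined on diagonals and legitimates iterating it to form $\nc\nc$, and confirming that functorial finiteness is genuinely automatic in the finite-type category $\C_{A_m}^n$, so that discarding condition (3) is harmless.
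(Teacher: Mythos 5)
Your proposal is correct and follows essentially the same route as the paper: the paper's proof consists exactly of observing that finiteness of $\C_{A_{m}}^{n}$ makes every subcategory functorially finite and then invoking Lemma \ref{g} to translate the remaining orthogonality condition into $\XX=\nc\nc\XX$. You simply spell out the dictionary (that $\Y=\bigcap_{i=1}^{n}\X[-i]^\perp$ makes condition (2) automatic and reduces everything to $\X=\bigcap_{i=1}^{n}{}^{\perp}\Y[i]$, i.e.\ $\XX=\nc\nc\XX$) in more detail than the paper does.
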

\begin{proof}
Since $\C_{A_{m}}^{n}$ is a finite triangulated category, any subcategory $\X$ of $\C_{A_{m}}^{n}$ is functorially finite. The equivalence is obvious by Lemma \ref{g}.
\end{proof}
We recall from \cite{CZ2} the definition of Ptolemy diagram and give a necessary condition for an $n$-cotorsion pair.
\begin{definition}
Let $\mathfrak{X}$ be a set of $n$-diagonals in $\Pi$. Then $\mathfrak{X}$ is called a Ptolemy diagram if for any two crossing $n$-diagonals $u=(i,j)$ and $v=(k,\ell)$ in $\mathfrak{X}$, those of $(i,k),(i,\ell),(j,k),(j,\ell)$, which are $n$-diagonals are in $\mathfrak{X}$.
\end{definition}
\begin{lemma}\label{j}
Let $\X$ be a subcategory of $\C_{A_{m}}^{n}$, and $\XX$ be the corresponding set of $n$-diagonals in $\Pi$. If $\XX=\nc\nc\XX$, then $\XX$ is a Ptolemy diagram.
\end{lemma}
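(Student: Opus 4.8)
The plan is to exploit the hypothesis $\XX=\nc\nc\XX$ directly, rather than re-deriving anything about the $n$-cotorsion pair. Recall that $\nc\nc\XX$ consists of exactly those $n$-diagonals $w$ that cross no member of $\nc\XX$; hence, to prove that a given $n$-diagonal $w$ lies in $\XX=\nc\nc\XX$, it suffices to show that $w$ crosses no $n$-diagonal belonging to $\nc\XX$. This turns the Ptolemy closure condition into a non-crossing condition, which is where the geometry does the work.

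First I would fix two crossing $n$-diagonals $u=(i,j)$ and $v=(k,\ell)$ in $\XX$ and normalise their endpoints to the cyclic order $i,k,j,\ell$ around $\Pi$, as the definition of crossing permits. The key structural observation is that the four candidate diagonals $(i,k),(i,\ell),(j,k),(j,\ell)$ are precisely the four sides of the quadrilateral with corners $i,k,j,\ell$, while $u$ and $v$ are its two diagonals. Consequently I only have to treat one side, say $w=(i,k)$, and the remaining three follow from the evident cyclic symmetry of the configuration; throughout, I only ever need $w$ to be an $n$-diagonal in order to conclude $w\in\nc\nc\XX$.

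Then I would prove the central geometric claim: every $n$-diagonal $t$ that crosses a side $w$ of this quadrilateral necessarily crosses $u$ or $v$. To do this I partition the vertices of $\Pi$ into the four open arcs $A,B,C,D$ lying strictly between the consecutive corners $i,k,j,\ell$. If $t$ crosses $w=(i,k)$, then, up to relabelling its endpoints, one endpoint lies in $A$ and the other in $B\cup\{j\}\cup C\cup\{\ell\}\cup D$. A short case check on the arc containing the second endpoint finishes it: if that endpoint lies in $B\cup\{j\}\cup C$ then $t$ crosses $v=(k,\ell)$, and if it lies in $\{\ell\}\cup D$ then $t$ crosses $u=(i,j)$. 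This uses only the basic criterion that two diagonals cross if and only if their endpoints separate each other on the boundary of $\Pi$.

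Finally I would assemble the argument. If $t\in\nc\XX$, then $t$ crosses no $n$-diagonal of $\XX$, in particular neither $u$ nor $v$; by the geometric claim $t$ therefore cannot cross $w$. Hence the $n$-diagonal $w$ crosses no member of $\nc\XX$, so $w\in\nc\nc\XX=\XX$, which is exactly what the Ptolemy condition requires. I expect the only real obstacle to be the bookkeeping of the arc case analysis in the geometric claim; everything else is formal once one identifies the four Ptolemy diagonals with the sides of the crossing quadrilateral and uses $\XX=\nc\nc\XX$ to reduce membership in $\XX$ to a non-crossing condition against $\nc\XX$.
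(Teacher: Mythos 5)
Your proposal is correct and follows essentially the same route as the paper: both arguments reduce the Ptolemy condition to the observation that any $n$-diagonal crossing a side $(i,k)$ of the quadrilateral spanned by $u$ and $v$ must cross $u$ or $v$, and then invoke $\XX=\nc\nc\XX$ to conclude $(i,k)\in\XX$. The only difference is presentational (the paper phrases it as a contradiction via showing $\nc\YY$ is always a Ptolemy diagram, and leaves the arc case analysis implicit, which you spell out).
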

\begin{proof}
By Theorem \ref{i}, it is enough to show for any subcategory $\Y$ of $\C_{A_{m}}^{n}$, $\nc\YY$ is a Ptolemy diagram.

Suppose $u=(i,j)$ and $v=(k,\ell)$ are two crossing $n$-diagonals in $\nc\YY$, and suppose $(i,k)$ is an $n$-diagonal, we need to show $(i,k)\in\nc\YY$.

If $(i,k)\not\in\nc\YY$, then there exists an $n$-diagonal $w\in\YY$ such that $w$ crosses $(i,k)$. Oberserve that every $n$-diagonal $w\in\YY$ crossing $(i,k)$ crosses either $u$ or $v$,  which is a contradiction with the assumption $u,v\in\nc\YY$.
\end{proof}

Note that when $n=1$, the converse of Lemma \ref{j} is true by \cite[Proposition 2.7]{HJR1}. However, the converse of Lemma \ref{j} is not true in general case. We see an example below.
\begin{example}\label{z}
Suppose $n=2, m=3$. The AR-quiver of $\C_{A_{3}}^{2}$ has been shown in Figure \ref{AR-quiver}.
\begin{itemize}
  \item [(1)] Let $\X=\{(1,4),(1,6),(3,6)\}$ be a subcategory of $\C_{A_{3}}^{2}$. Then $\XX$ is a Ptolemy diagram. However, $\nc\XX=\{(1,6),(1,8),(7,10),(6,9)\}$ and $\nc\nc\XX=\{(1,4),(1,6),(3,6),(2,5)\}$, so we have $\XX\neq\nc\nc\XX$. Thus, $(\X,\X[-1]^\perp\cap\X[-2]^\perp)$ can not be a $2$-cotorsion pair.
  \item [(2)] Let $\Y=\{(1,4),(1,6),(3,6),(2,5)\}$ be a subcategory of $\C_{A_{3}}^{2}$. We have $\nc\YY=\{(1,6),(1,8),(7,10),(6,9)\}$ and $\nc\nc\YY=\{(1,4),(1,6),(3,6),(2,5)\}$, so we have $\YY=\nc\nc\YY$. Thus, $(\Y,\Y[-1]^\perp\cap\Y[-2]^\perp)$ is a $2$-cotorsion pair.
  \end{itemize}

\end{example}
\subsection{Mutation of $n$-cotorsion pairs in $\C_{A_{m}}^{n}$}
\begin{definition}
Let $\X$ be a subcategory of $\C_{A_{m}}^{n}$ and $\XX$ be the corresponding set of $n$-diagonals in $\Pi$ satisfying $\XX=\nc\nc\XX$. The set of $n$-diagonals in $\XX$ that does not cross any diagonals in $\XX$ is called the frame of $\XX$, denoted by $F_{\XX}$.
\end{definition}
By the assumption above, the subcategory $\X$ is a first half of an $n$-cotorsion pair in $\C_{A_{m}}^{n}$ by Theorem \ref{i}, and the subcategory $E(F_{\XX})$ which corresponds to $F_{\XX}$ is the core of the $n$-cotorsion pair $(\X,\bigcap\limits_{i=1}^{n}\X[-i]^\perp)$, i.e. $\I(\X)=E(F_{\XX})$.
\begin{definition}
\begin{itemize}
\item [(1)] For any $n$-diagonal $(i,j)$ of $\Pi$, its rotation in $\Pi$, denoted by $\rho_{\Pi}(i,j)$, is defined to be $(i-n,j-n)$, i.e.
      $$\rho_{\Pi}(i,j)=(i-n,j-n).$$
\item [(2)] Let $\mathfrak{D}$ be a set of non-crossing $n$-diagonals of $\Pi$.
\begin{itemize}
  \item [(i)] The $n$-diagonals in $\mathfrak{D}$ divide $\Pi$ into polygons, called $\mathfrak{D}$-cells. Thus, any $n$-diagonals $(i,j)$ which neither is in $\mathfrak{D}$ nor crosses any diagonals in $\mathfrak{D}$ is an $n$-diagonal of a $\mathfrak{D}$-cell. We call the rotation of $(i,j)$ in this $\mathfrak{D}$-cell the $\mathfrak{D}$-rotation of $(i,j)$, and denote it by $\rho_{\D}(i,j)$.
  \item [(ii)] Let $\mathfrak{X}$ be a set of diagonals of $\Pi$ satisfying $\mathfrak{X}=\nc\nc\mathfrak{X}$. The $\mathfrak{D}$-rotation of $\mathfrak{X}$ is defined as
      $$\rho_{\mathfrak{D}}(\mathfrak{X}):=\{\rho_{\mathfrak{D}}(i,j)|(i,j)\in\mathfrak{X}\backslash\mathfrak{D}\}\cup\mathfrak{D}.$$
\end{itemize}
\end{itemize}
\end{definition}
\begin{example}
Suppose $n=2, m=3$. The AR-quiver of $\C_{A_{3}}^{2}$ has been shown in Figure \ref{AR-quiver}. Let
$$\mathfrak{X}=\{(1,4),(1,6),(1,8),(7,10),(6,9)\}.$$
It is easy to check the set $\mathfrak{X}$ satisfies $\mathfrak{X}=\nc\nc\mathfrak{X}$ and $F_{\mathfrak{X}}=\{(1,4),(1,6)\}$. Let $\mathfrak{D}=\{(1,6)\}\subset F_{\mathfrak{X}}$. Then $\mathfrak{D}$ divides $\Pi$ into two polygons. The $\mathfrak{D}$-rotation of $\mathfrak{X}$ is
$$\rho_{\mathfrak{D}}(\mathfrak{X})=\{(2,5),(1,6),(6,9),(1,8),(7,10)\},$$
where for example $\rho_{\mathfrak{D}}(1,4)=(2,5).$ Let $\mathfrak{Y}=\rho_{\mathfrak{D}}(\mathfrak{X})$. It is clear that $\mathfrak{Y}$ still satisfies $\mathfrak{Y}=\nc\nc\mathfrak{Y}$.

See Figure \ref{8} and  Figure \ref{9} for  geometric explanations.
\end{example}
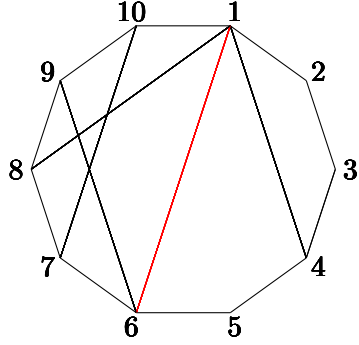
\begin{figure}[h]
\centering
\begin{tikzpicture}[scale=2]
\foreach \x in {-36,0,36,72,108,144,180,216,252,288,324}{
\draw (\x:1 cm) -- (\x + 36: 1cm) -- cycle;
\draw (36*1+36:1.1) node {$1$} ;
\draw (36*0+36:1.1) node {$2$} ;
\draw (36*2+36:1.1) node {$10$} ;
\draw (36*3+36:1.1) node {$9$} ;
\draw (36*4+36:1.1) node {$8$} ;
\draw (36*5+36:1.1) node {$7$} ;
\draw (36*6+36:1.1) node {$6$} ;
\draw (36*7+36:1.1) node {$5$} ;
\draw (36*8+36:1.1) node {$4$} ;
\draw (36*9+36:1.1) node {$3$} ;
\draw (36*9:1cm)--(36*2:1cm);
\draw[red] (36*7:1cm)--(36*2:1cm);
\draw (36*5:1cm)--(36*2:1cm);
\draw (36*6:1cm)--(36*3:1cm);
\draw (36*7:1cm)--(36*4:1cm);
}
\end{tikzpicture}
\caption{The set $\mathfrak{X}$ satisfies $\mathfrak{X}=\nc\nc\mathfrak{X}$ with $\mathfrak{D}=\{(1,6)\}$}
\label{8}
\end{figure}
\begin{figure}[h]
\centering
\begin{tikzpicture}[scale=2]
\foreach \x in {-36,0,36,72,108,144,180,216,252,288,324}{
\draw (\x:1 cm) -- (\x + 36: 1cm) -- cycle;
\draw (36*1+36:1.1) node {$1$} ;
\draw (36*0+36:1.1) node {$2$} ;
\draw (36*2+36:1.1) node {$10$} ;
\draw (36*3+36:1.1) node {$9$} ;
\draw (36*4+36:1.1) node {$8$} ;
\draw (36*5+36:1.1) node {$7$} ;
\draw (36*6+36:1.1) node {$6$} ;
\draw (36*7+36:1.1) node {$5$} ;
\draw (36*8+36:1.1) node {$4$} ;
\draw (36*9+36:1.1) node {$3$} ;
\draw (36*8:1cm)--(36*1:1cm);
\draw[red] (36*7:1cm)--(36*2:1cm);
\draw (36*5:1cm)--(36*2:1cm);
\draw (36*6:1cm)--(36*3:1cm);
\draw (36*7:1cm)--(36*4:1cm);
}
\end{tikzpicture}
\caption{The set $\mathfrak{Y}=\rho_{\mathfrak{D}}(\mathfrak{X})$ still satisfies $\mathfrak{Y}=\nc\nc\mathfrak{Y}$}
\label{9}
\end{figure}
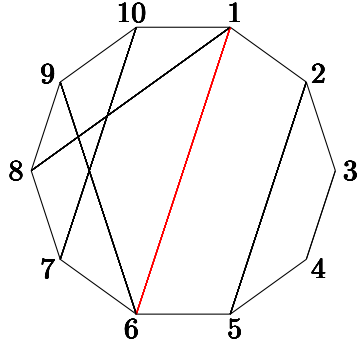
Now we prove that rotation of a set of $n$-diagonals $\mathfrak{X}$ of $\Pi$ satisfying $\mathfrak{X}=\nc\nc\mathfrak{X}$ gives a geometric model of mutation of $n$-cotorsion pairs in $\C_{A_{m}}^{n}$.

\begin{theorem}\label{mainresult}
Let $\X$ be a subcategory of $\C_{A_{m}}^{n}$,  $\XX$ be the corresponding set of $n$-diagonals in $\Pi$ satisfying $\XX=\nc\nc\XX$, and $F_{\XX}$ be the frame of $\XX$. Then the following hold.
\begin{itemize}
  \item [(1)] The frame $F_{\XX}$ of $\XX$ corresponds to the core of the $n$-cotorsion pair \\ $(\X,\bigcap\limits_{i=1}^{n}\X[-i]^\perp)$, i.e. $\I(\X)=E(F_{\XX})$, where $E(F_{\XX})$ represents the subcategory of $\C_{A_{n}}^{m}$ which corresponds to $F_{\XX}$.
   \item [(2)] Let $\mathfrak{D}\subset F_{\XX}$ be a set of non-crossing $n$-diagonals and $E(\mathfrak{D})$ be the corresponding subcategory of $\C_{A_{m}}^{n}$,  we have that
       $$\mu_{E(\mathfrak{D})}(\X)=E(\rho_{\mathfrak{D}}(\XX)).$$
\end{itemize}
Consequently, a rotation of a set of  $n$-diagonals $\mathfrak{X}$ of $\Pi$ satisfying $\mathfrak{X}=\nc\nc\mathfrak{X}$  is again a set of $n$-diagonals $\mathfrak{Y}=\rho_{\mathfrak{D}}(\mathfrak{X})$ of $\Pi$ satisfying $\mathfrak{Y}=\nc\nc\mathfrak{Y}$.
\end{theorem}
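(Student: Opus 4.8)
The plan is to treat the two parts separately: (1) is a direct combinatorial identity, while (2) reduces, via the machinery of Section 3, to computing the backward mutation on each indecomposable and identifying the cocone with a rotated $n$-diagonal. For (1), recall from the paragraph preceding Theorem \ref{i} (via Lemma \ref{g}) that $\bigcap_{i=1}^{n}\X[-i]^\perp$ corresponds to $\nc\XX$. Since $\W\mapsto\widetilde{\W}$ is a bijection between subcategories of $\C_{A_{m}}^{n}$ and sets of $n$-diagonals of $\Pi$, it carries an intersection of subcategories to the intersection of the corresponding diagonal sets. Hence $\I(\X)=\X\cap\bigcap_{i=1}^{n}\X[-i]^\perp$ corresponds to $\XX\cap\nc\XX$, which is precisely the set of $n$-diagonals in $\XX$ that cross no diagonal of $\XX$, i.e. $F_{\XX}$. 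Thus $\I(\X)=E(F_{\XX})$.

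For (2) I would first check that $\D:=E(\mathfrak{D})$ satisfies the hypotheses of Theorem \ref{main}. It is functorially finite because $\C_{A_{m}}^{n}$ has only finitely many indecomposables; since $\mathfrak{D}\subset F_{\XX}$, part (1) gives $\D\subset E(F_{\XX})=\I(\X)$, so $\D$ is $(n+1)$-rigid by Remark \ref{l}; and $\mathbb{S}\D=\D[n+1]$ holds automatically because $\C_{A_{m}}^{n}$ is $(n+1)$-Calabi--Yau. As $\XX=\nc\nc\XX$, Theorem \ref{i} shows $(\X,\bigcap_{i=1}^{n}\X[-i]^\perp)$ is an $n$-cotorsion pair, so Theorem \ref{main} applies and $\mu_{\D}(\X)$ is the first half of an $n$-cotorsion pair. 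Because $\C_{A_{m}}^{n}$ is Krull--Schmidt and a right $\D$-approximation of a direct sum splits as the sum of right $\D$-approximations of the summands, $\mu_{\D}(\X)$ is generated under $\add$ by $\D$ together with the cocones $\mu_{\D}(M_u)$ of right $\D$-approximations of the indecomposables $M_u$, $u\in\XX$. Moreover $\D\subset\mu_{\D}(\X)$ by the proposition preceding Theorem \ref{main}, which accounts for the summand $\mathfrak{D}$ of $\rho_{\mathfrak{D}}(\XX)$; so it remains to treat $u\in\XX\setminus\mathfrak{D}$.

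The crux, and the step I expect to be hardest, is to show $\mu_{\D}(M_u)=M_{\rho_{\mathfrak{D}}(u)}$ for $u=(i,j)\in\XX\setminus\mathfrak{D}$. Since $\mathfrak{D}\subset F_{\XX}\subset\XX$, no diagonal of $\mathfrak{D}$ crosses $u$, so $u$ lies in a unique $\mathfrak{D}$-cell $P$ and $M_u\in\Z$. I would build the right $\D$-approximation $g\colon D\to M_u$ from the $\mathfrak{D}$-diagonals bounding $P$ that are incident to $i$ and $j$, extend it to a triangle $N\to D\xrightarrow{g}M_u\to N[1]$, and verify the approximation property by converting the required $\Hom$- and $\Ext$-vanishing into non-crossing statements through Lemma \ref{g}. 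The hard part is the final identification of the cocone $N$ with the indecomposable attached to the $\mathfrak{D}$-rotation $\rho_{\mathfrak{D}}(u)$ of $u$ inside $P$, since this forces one to match the direction and magnitude of the geometric rotation against the triangulated structure. Conceptually this is clarified by Section 3: in the subfactor $\mathfrak{U}=\Z/\D$ the proof of Theorem \ref{main} identifies $\mu_{\D}(\X)$ with $\overline{\X}\langle -1\rangle$, and under the geometric model of $\mathfrak{U}$—whose indecomposables are the $n$-diagonals lying strictly inside the $\mathfrak{D}$-cells—the shift $\langle -1\rangle$ is realized as the cell rotation $\rho_{\mathfrak{D}}$. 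Carrying this out on each indecomposable yields $\mu_{\D}(\X)=E(\rho_{\mathfrak{D}}(\XX))$.

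Finally, the concluding statement follows at once: since $\mu_{\D}(\X)=E(\rho_{\mathfrak{D}}(\XX))$ is the first half of an $n$-cotorsion pair by Theorem \ref{main}, Theorem \ref{i} gives $\rho_{\mathfrak{D}}(\XX)=\nc\nc\rho_{\mathfrak{D}}(\XX)$, so the $\mathfrak{D}$-rotation of a configuration satisfying $\mathfrak{X}=\nc\nc\mathfrak{X}$ again satisfies this condition.
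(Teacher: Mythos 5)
Your overall architecture matches the paper's: part (1) via the correspondence between $\bigcap_{i=1}^{n}\X[-i]^\perp$ and $\nc\XX$, the reduction of part (2) to computing $\mu_{E(\mathfrak{D})}$ on a single indecomposable $E(i,j)$ with $(i,j)\in\XX\setminus\mathfrak{D}$, and the final assertion via Theorems \ref{main} and \ref{i}. However, the one step you yourself flag as the crux --- identifying the cocone $N$ of the right $E(\mathfrak{D})$-approximation of $E(i,j)$ with $E(\rho_{\mathfrak{D}}(i,j))$ --- is never actually carried out. Your fallback, that in the subfactor category $\mathfrak{U}=\Z/E(\mathfrak{D})$ the shift $\langle -1\rangle$ ``is realized as the cell rotation $\rho_{\mathfrak{D}}$,'' is essentially a restatement of the claim to be proved: the paper establishes no geometric model of $\mathfrak{U}$, and producing such a model with its shift computed is exactly what part (2) amounts to. As written, the argument is circular at its key point.

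The paper closes this gap by running the construction in the opposite direction: it starts from the rotated diagonal $(k,\ell)=\rho_{\mathfrak{D}}(i,j)$, which crosses $(i,j)$, and uses the exchange triangle $E(k,\ell)\to E(i,\ell)\oplus E(k,j)\xrightarrow{f} E(i,j)\to E(k,\ell)[1]$ read off from the AR-quiver of $\C_{A_{m}}^{n}$. The decisive geometric observation is that $(i,\ell)$ and $(k,j)$ are edges of the $\mathfrak{D}$-cell containing $(i,j)$, hence each is either a boundary edge of $\Pi$ (contributing a zero object) or lies in $\mathfrak{D}$; thus the middle term belongs to $E(\mathfrak{D})$. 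Since $(k,\ell)$ crosses no diagonal of $\mathfrak{D}$, one has $\Hom(D,E(k,\ell)[1])=0$ for all $D\in E(\mathfrak{D})$, and the long exact sequence shows $f$ is a minimal right $E(\mathfrak{D})$-approximation, so $\mu_{E(\mathfrak{D})}(E(i,j))=E(k,\ell)$ by Definition \ref{e}. To repair your version you would need to supply precisely this triangle (or an equivalent explicit identification of the cocone); verifying that your candidate map built from the cell edges incident to $i$ and $j$ is an approximation is the easy half, and it is the computation of its cocone that requires the explicit exchange triangle.
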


\begin{proof}
It is obvious that (1) holds.

In order to prove (2), it is enough to show that for any $n$-diagonal $(i,j)(i<j)$ in $\XX$ which is not in $\mathfrak{D}$, if $\rho_{\mathfrak{D}}(i,j)=(k,\ell)$, then $\mu_{E(\mathfrak{D})}(E(i,j))=E(k,\ell)$.

Let $\mathfrak{X}^\prime$ be the $\mathfrak{D}$-cell containing $(i,j)$. Since $\rho_{\mathfrak{D}}(i,j)=(k,\ell)$, $(k,\ell)$ is the rotation of $(i,j)$ in $\mathfrak{U}^\prime$. That means $(k,\ell)$ is an $n$-diagonal of $\mathfrak{X}^\prime$, so $(k,\ell)$ does not cross any diagonal in $\mathfrak{D}$, we have $E(k,\ell)\in\bigcap\limits_{i=1}^{n}E(\mathfrak{D})[-i]^\perp$.

Since $(i,j)$ is an $n$-diagonal, $(k,\ell)$ crosses $(i,j)$. Without loss of generality, we assume $k<i<\ell<j$. By the definition of rotation, we have $(k,\ell)=(i-n,j-n)$. By the AR-quiver of $\C_{A_{m}}^{n}$, it is easy to check that the crossing diagonals $(i,j)$ and $(k,\ell)$ induce a triangle in $\C_{A_{m}}^{n}$
$$E(k,\ell)\rightarrow E(i,\ell)\oplus E(k,j)\stackrel{f}\rightarrow E(i,j)\rightarrow E(k,\ell)[1]$$
By the definition of rotation, $(i,\ell)$ and $(k, j)$ are edges of $\mathfrak{X}^\prime$. Thus, $E(i,\ell)$ (resp. $E(k,j)$) is an zero object or a non-zero object in $E(\mathfrak{D})$, i.e. $E(i,\ell)\oplus E(k,j)\in E(\mathfrak{D})$. Applying $\Hom(D,-)$ to the above triangle for any $D\in E(\mathfrak{D})$, we have an exact sequence
$$\Hom(D,E(i,\ell)\oplus E(k,j))\rightarrow \Hom(D,E(i,j))\rightarrow \Hom(D,E(k,\ell)[1]),$$
where $\Hom(D,E(k,\ell)[1])=0$ since $E(k,\ell)\in\bigcap\limits_{i=1}^{n}E(\mathfrak{D})[-i]^\perp$. This implies that $f$ is a right $E(\mathfrak{D})$-approximation and it is minimal since $E(k,\ell)$ is indecomposable. So by definition, $\mu_{E(\mathfrak{D})}(E(i,j))=E(k,\ell)$ and we complete the proof of $\mu_{E(\mathfrak{D})}(\X)=E(\rho_{\mathfrak{D}}(\XX)).$ Then the last assertion follows from Theorem \ref{main}.
\end{proof}

\bigskip


\end{document}